\definecolor{cite}{rgb}{0.50,0.00,1.00}
\definecolor{url}{rgb}{0.00,0.50,0.75}
\definecolor{link}{rgb}{0.00,0.00,0.50}
\theoremstyle{definition} 
\newtheorem{Unity}{Unity}[section] 
\newtheorem*{defn*}{Definition} 
\newtheorem{defn}[Unity]{Definition} 
\theoremstyle{plain} 
\newtheorem*{thm*}{Theorem}
\newtheorem{thm}[Unity]{Theorem}
\newtheorem{prop}[Unity]{Proposition}
\newtheorem*{cor*}{Corollary}
\newtheorem{cor}[Unity]{Corollary}
\newtheorem{lem}[Unity]{Lemma}
\theoremstyle{remark} 
\newtheorem*{rmk*}{Remark}
\newtheorem{rmk}[Unity]{Remark}
\numberwithin{Unity}{section}
\newcommand{\sF}{\mathscr{F}}
\newcommand{\sG}{\mathscr{G}}
\newcommand{\sM}{\mathscr{M}}
\newcommand{\sO}{\mathscr{O}}
\newcommand{\sP}{\mathscr{P}}
\newcommand{\sX}{\mathscr{X}}
\newcommand{\bZ}{\mathbb{Z}}
\newcommand{\bN}{\mathbb{N}}
\newcommand{\spec}{\textrm{Spec}}
\begin{document}
\title{Smoothing of semistable Fano varieties}
\author{Junchao Shentu}
\address{Shanghai Center for Mathematical Science, Fudan University, Shanghai, P. R. China}
\email{stjc@amss.ac.cn}
\begin{abstract}
Given any field $k$ (not necessarily perfect), we study the smoothing of a semistable Fano variety over $k$. In characteristic 0, the reduced semistable Fano degenerate fibers of Mori fibrations are classified. In positive characteristic, under a suitable $W_2$ lifting assumption, we prove that a semistable Fano variety always appears as a degenerate fiber in a semistable family if it has a global log structure (in the sense of Fontaine-Illusie-Kato) of semistable type. A similar smoothing result over a mixed characteristic base is also obtained.
\end{abstract}
\maketitle
\section{Introduction}
In this paper we investigate the possible reduced semistable degenerate fibers of a smooth Fano family. The investigation is motivated by the following two problems:
\begin{enumerate}
  \item Given a projective algebraic variety $X$ (defined over the complex number field) whose Kodaira dimension is $-\infty$. Then conjecturally there is a modification $X'\rightarrow X$ and a fibration $f:X'\rightarrow B$ (Mori fibration) such that $X'$ is smooth projective and the general fibers of $f$ are smooth Fano varieties. Resembling to Kodaira's theory of degenerations of elliptic curves, the geometry of the singular fibers (usually non-normal) of $f$ have great influence on the birational geometry of $X$. The simplest possible non-normal singularities in the fibers of $f$ are the semistable singularities (those which are analytically isomorphic to a product of normal crossing singularities). In fact, conjecturally (\cite{AK2000}, Conjecture 0.2) after a finite base change and a birational modification, the family can always be brought to a semistable family (\cite{AK2000}). Hence it's natural to ask what kind of Fano variety with semistable singularities appear in a semistable family with Fano general fibers.
  \item Let $X$ be a projective smooth Fano variety over a local field $K$ of mixed characteristic. One can not hope generally that $X$ has a smooth model even after a base field extension. However, it is conjectured that after a finite extension $K\subseteq L$, $X_L$ would have a semistable model, i.e., there exists a semistable family $\sX$ over $O_L$ whose generic fiber is isomorphic to $X_L$. Therefore it is natural to ask what kind of Fano variety with semistable singularities appear to be the special fiber of a semistable model of a Fano manifold defined over $K$.
\end{enumerate}
The degeneration of Fano manifolds is studied by T. Fujita (\cite{Fujita1990}), who gives a complete list of reducible singular fibers in a (minimal) family of del Pezzo manifolds over an algebraic curve (A Fano manifold $X$ is called del Pezzo if $\textrm{ind}X=\dim X-1$). Latter, Y. Kachi \cite{Kachi2007} proves that all d-semistable normal crossing del Pezzo surfaces are contained in Fujita's list by showing that each d-semistable del Pezzo surface has a smoothing. This smoothable property is generalized to arbitrary dimensional d-semistable normal crossing Fano varieties defined over an algebraically closed field of characteristic zero by Tziolas \cite{Tziolas2015}. Since any family over a curve can always be modified into a family with normal crossing fibers (\cite{Mumford1973}), Tziolas's work classifies the simplest Fano degenerate fibers of a 1-parameter family of Fano manifolds.

However the Mori fibrations generally have higher dimensional bases, in which case the normal crossing singularities are no longer enough for the degenerate fibers (\cite{Karu1999}). The simplest example in which the degeneration can not be normal crossing is the 2-parameter family of surfaces defined by $t_1=x_1y_1$ and  $t_2=x_2y_2$. Conjecturally (\cite{AK2000}, Conjecture 0.2), the best singularities that one can hope for is the semistable singularities (\'etale locally a product of normal crossing singularities) and any geometric generic integral family can be modified into a semistable family. Given pairs $(X,D_X)$ and $(Y,D_Y)$ such that $X$, $Y$ are smooth and $D_X$, $D_Y$ are normal crossing divisors. A log morphism $f:(X,D_X)\rightarrow(Y,D_Y)$ (A morphism $f:X\rightarrow Y$ such that $f^{-1}(D_Y)\subseteq D_X$) is called semistable if formal locally $f$ is isomorphic to the spectrum of
$$k[[y_1,\cdots,y_r]]\rightarrow k[[x_1,\cdots,x_n]]$$
$$y_i\mapsto x_{l_{i-1}}+\cdots x_{l_{i}}$$
where $0=l_0\leq l_1<\cdots<l_r=n$. $D_X$ (resp. $D_Y$) are defined by the product of some of $x_i$ (resp. $y_i$).

In this paper, we study the the problem that which kind of semistable Fano varieties may appear in a semistable family whose general fibers are Fano manifolds. We work over an \textit{arbitrary} field and allow the degenerate fiber to have self intersections. The main results are:
\begin{thm}[Corollary \ref{cor_smoothing_0}]\label{thm_main1}
Let $k$ be a field of characteristic 0 and $X$ be a Fano semistable variety. Let $r\geq 1$ be an integer. Then the followings are equivalent:
\begin{enumerate}
  \item there exists a smooth variety with a normal crossing divisor $(\sX,D_{\sX})$ which is semistable (Definition \ref{defn_semistable}) over $(B,D_B)$ such that
\begin{enumerate}
  \item $B$ is an $r$ dimensional smooth variety over $k$, $0\in B$ is a $k$-point, and $D_B$ is a simple normal crossing divisor whose number of branches at $0$ is $r$;
  \item $\sX_0\simeq X$.
\end{enumerate}
  \item $X$ has a log structure (in the sense of Kato-Fontain-Illusie) of semistable type over $(k,\mathbb{N}^r\mapsto 0)$.
\end{enumerate}
\end{thm}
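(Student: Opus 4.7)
The implication $(1)\Rightarrow(2)$ I would verify essentially from definitions. Endow $\sX$, $B$ and $\spec k$ with the divisorial log structures associated respectively to $D_\sX$, $D_B$, and the $r$ branches of $D_B$ through $0$; all three log schemes are log smooth, the morphism $\sX\to B$ is log smooth by semistability, and pulling back to $X\simeq \sX_0$ produces a log scheme over the log point $(\spec k,\mathbb{N}^r\to 0)$ which is of semistable type by the prescribed formal semistable local model.

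For the substantive direction $(2)\Rightarrow(1)$, the plan is a formal log smooth deformation argument followed by algebraization. Write $S_0=(\spec k,\mathbb{N}^r\to 0)$, and for $n\geq 1$ let $A_n=k[t_1,\ldots,t_r]/(t_1,\ldots,t_r)^{n+1}$ equipped with the divisorial log structure of $\{t_1\cdots t_r=0\}$. The morphism $S_n:=\spec A_n\to S_0$ sending the $i$-th standard generator of $\mathbb{N}^r$ to $t_i$ is log smooth. By F.~Kato's theory of log smooth deformations, obstructions to lifting a log smooth $X_n^\dagger/S_n$ to $X_{n+1}^\dagger/S_{n+1}$ lie in $H^2(X,T_{X/S_0}^{\log})$, and when lifts exist they form a torsor under $H^1(X,T_{X/S_0}^{\log})$; the log tangent sheaf $T_{X/S_0}^{\log}$ is locally free because $X^\dagger/S_0$ is log smooth of semistable type.

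The technical heart of the argument is the vanishing
\[
H^i(X,T_{X/S_0}^{\log})=0\quad\text{for all }i\geq 2,
\]
which I would deduce from a Kodaira--Akizuki--Nakano vanishing theorem adapted to log smooth proper schemes of semistable type. Indeed, Serre duality rewrites these as vanishings of cohomology of log differential sheaves twisted by $\omega_X$, and since $X$ is Fano the anticanonical bundle $\omega_X^{-1}$ is ample, so a log Akizuki--Nakano-type theorem applies. In characteristic $0$ this can be established either via mixed Hodge theory for log smooth proper schemes, or, in the spirit of the positive-characteristic part of the paper, by spreading out and reducing modulo a prime $p\gg 0$ where a $W_2$ lift exists and Deligne--Illusie applies. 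Granted the vanishing, the formal deformations assemble into a formal log smooth family $\widehat\sX^\dagger$ over $\widehat S:=\spec k[[t_1,\ldots,t_r]]$ with its standard semistable log structure.

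The final step is algebraization. The ample invertible sheaf $\omega_X^{-1}$ lifts to a relatively ample line bundle on $\widehat\sX$ because the obstruction in $H^2(X,\sO_X)$ vanishes (log Kodaira vanishing suffices, as $X$ is a semistable Fano); Grothendieck's existence theorem then produces a projective algebraic semistable family over $\widehat S$, and Artin's algebraization theorem replaces $\widehat S$ by an \'etale neighborhood of the origin in $\mathbb{A}^r_k$, which I take as $(B,D_B)$ with $D_B=\{t_1\cdots t_r=0\}$. The resulting $(\sX,D_\sX)\to(B,D_B)$ is the required semistable family. The main obstacle throughout is the log vanishing $H^2(X,T_{X/S_0}^{\log})=0$: once it is in place, the formal deformations, their algebraization, and the geometric conclusions are all standard, and it is precisely this vanishing that the $W_2$-lifting machinery developed elsewhere in the paper is designed to secure.
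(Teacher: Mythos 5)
Your proposal is correct and follows essentially the same route as the paper: the forward direction by restricting the divisorial log structures to the fiber, and the converse by Kato's log smooth deformation theory, the vanishing of $H^2(X,(\Omega_{X/\mathbf{k}})^{\vee})$ and $H^2(X,\sO_X)$ via a log Akizuki--Nakano theorem (proved by reduction mod $p$ and the log Deligne--Illusie decomposition), followed by Grothendieck existence and Artin approximation over an \'etale neighborhood of the origin in $\mathbb{A}^r_k$. The only step you gloss over that the paper treats explicitly is the limit-preserving property of the semistable log smooth moduli problem (via Olsson's log stacks), which is needed to invoke Artin's theorem.
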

A variety with semistable singularities is called 'Fano' if its dualizing sheaf is an anti-ample invertible sheaf. For the readers who are not familiar with log geometry in the sense of Fontaine-Illusie-Kato, the condition that a semistable variety has a log structure of semistable type is a global condition on how do the components of the variety intersect each other. For example, if $X$ is a union of two smooth components $X_1$ and $X_2$ which intersect transversely along a smooth variety $D$. Then $X$ has a log structure of semistable type if and only if $N_{X_1 /D}\otimes N_{X_2/D}$ is a trivial line bundle on $D$.
\begin{thm}[Corollary \ref{cor_smoothing_p}]\label{thm_main2}
Let $k$ be a field of characteristic $p>0$ and $X$ is a log variety semistable log smooth over $(k,\mathbb{N}^r\mapsto 0)$ for some $r\geq 0$. Assume $\dim X<p$. If $X$ is Fano and admits a log smooth lifting over $(C_2(k),\mathbb{N}^r\mapsto 0)$, then there exists a smooth variety with a normal crossing divisor $(\sX,D_{\sX})$ which is semistable over $(B,D_B)$ such that
\begin{enumerate}
  \item $B$ is an $r$ dimensional smooth variety over $k$, $0\in B$ is a $k$-point, and $D_B$ is a simple normal crossing divisor whose number of branches at $0$ is $r$;
  \item $\sX_0\simeq X$ as log varieties.
\end{enumerate}
Moreover if $r=1$ (i.e., $X$ is $d$-semistable in the sense of \cite{Friedman1983}), then $X$ appears in a semistable reduction over $C(k)$.
\end{thm}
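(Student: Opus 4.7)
The plan is to build $\sX$ first as a formal log smooth family over $\spec k[[t_1,\ldots,t_r]]$ by inductive log deformation, then algebraize using a lifted polarization and descend to an \'etale neighbourhood of $0$ in $B$. For each $n\geq 1$ set $A_n:=k[t_1,\ldots,t_r]/\mathfrak m^{n+1}$, equipped with the log structure $\mathbb{N}^r\to A_n$, $e_i\mapsto t_i$. By Kato's log deformation theory, once a log smooth lift $X_n/A_n$ of $X$ is given, the obstruction to extending it to a log smooth $X_{n+1}/A_{n+1}$ lies in $H^2(X,T^{\log}_{X/k})\otimes_k \mathfrak m^{n+1}/\mathfrak m^{n+2}$, and the set of extensions, once non-empty, is a torsor over the $H^1$ of the same sheaf. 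So the formal tower exists as soon as $H^2(X,T^{\log}_{X/k})=0$.

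To check the vanishing I combine log Serre duality with a log Akizuki--Nakano vanishing theorem. Since $X$ is semistable log smooth over $(k,\mathbb{N}^r\mapsto 0)$, the sheaf $\Omega^{\log,1}_{X/k}$ is locally free of rank $n=\dim X$, one has $\omega^{\log}_{X/k}=\det\Omega^{\log,1}_{X/k}\cong \omega_X$, and the perfect wedge pairing $\Omega^{\log,1}_{X/k}\otimes \Omega^{\log,n-1}_{X/k}\to \omega^{\log}_{X/k}$ identifies $T^{\log}_{X/k}\cong \Omega^{\log,n-1}_{X/k}\otimes \omega_X^{-1}$. The Fano hypothesis makes $L:=\omega_X^{-1}$ ample, and the log Akizuki--Nakano vanishing $H^i(X,\Omega^{\log,p}_{X/k}\otimes L)=0$ for $p+i>n$ then yields $H^{\geq 2}(X,T^{\log}_{X/k})=0$ on taking $p=n-1$. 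In characteristic $p>0$ the $C_2(k)$-lift and the bound $\dim X<p$ are exactly the hypotheses needed for the log version of Deligne--Illusie: the Frobenius push-forward $F_*\Omega^{\log,\bullet}_{X/k}$ decomposes in the derived category in degrees $<p$, from which the Akizuki--Nakano vanishing in the relevant range follows by the usual Hodge-theoretic argument.

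Next I lift the ample bundle $L=\omega_X^{-1}$ to every $X_n$: the obstruction is $H^2(X,\mathcal O_X)$, which is the $p=n$, $i=2$ case of the same vanishing since $\omega^{\log}_{X/k}\otimes L\cong \mathcal O_X$. Hence the polarized formal family $(\widehat{\sX},\widehat L)$ exists over $\spec k[[t_1,\ldots,t_r]]$; Grothendieck's formal existence theorem algebraizes it to a projective family over $\spec k[[t_1,\ldots,t_r]]$, and Artin approximation descends the output to a projective family $(\sX,D_{\sX})\to (B,D_B)$ over an \'etale neighbourhood $B$ of $0$ in $\spec k[t_1,\ldots,t_r]$ with $D_B$ the restriction of the coordinate divisor. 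That the algebraic family is semistable in the sense of Definition \ref{defn_semistable}, and not just log smooth, is forced by the fact that the log structure of semistable type on $X$ is by definition the restriction of the standard log structure on a product of normal crossings, which propagates to each $X_n$ and hence to $\sX$.

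For the last statement ($r=1$), the same deformation scheme is run over the $p$-adic thickenings $A_n:=C_{n+1}(k)$ carrying the log structure $\mathbb N\to C_{n+1}(k)$, $1\mapsto p$; the $C_2(k)$-lift is precisely $X_1$, the obstructions to extending to each $X_n$ again lie in $H^2(X,T^{\log}_{X/k})=0$, and algebraization via the lifted polarization over the complete DVR $C(k)$ produces a semistable model of $X$ over $C(k)$. The central obstacle of the entire argument is the log Akizuki--Nakano vanishing in positive characteristic from only a $C_2$-lift; this is precisely where the hypotheses $\dim X<p$ and the existence of a $C_2(k)$-lift must enter, in order for the log Deligne--Illusie decomposition of $\tau_{<p}F_*\Omega^{\log,\bullet}_{X/k}$ to be available.
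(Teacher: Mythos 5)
Your proposal follows essentially the same route as the paper: the log Deligne--Illusie decomposition under a $C_2(k)$-lift with $\dim X<p$ gives the log Akizuki--Nakano vanishing, which kills both the deformation obstruction $H^2(X,(\Omega_{X/\mathbf{k}})^{\vee})$ and the polarization obstruction $H^2(X,\sO_X)$, after which the formal family over $k[[t_1,\ldots,t_r]]$ (resp.\ $C(k)$ when $r=1$) is algebraized by Grothendieck existence and spread out by Artin approximation. The one ingredient you pass over silently is the verification that the groupoid of semistable log smooth families is limit preserving (the paper's Proposition \ref{prop_limit_preserving}, proved via Olsson's stack of log structures), which is what licenses the application of Artin approximation and underlies your claim that semistability, not merely log smoothness, propagates to the algebraic family over an \'etale neighbourhood of $0$.
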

The technique that we use in this paper is the log deformation, which is first used by Y. Kawamata and Y. Namikawa to smooth certain normal crossing Calabi-Yau varieties in \cite{Kawamata1994} (although they do not use the formal language of log geometry). There are two advantages by using deformations to respect log structures:
\begin{enumerate}
  \item If we endow a normal crossing singularity $x_1\cdots x_r=0$ with the canonical log structure, it becomes smooth in the log scheme category. Although there are many types of deformations of $x_1\cdots x_r=0$, the log smooth deformation of $x_1\cdots x_r=0$ which respects the canonical log structure is quite simple. There are two types of log smooth deformation of $x_1\cdots x_r=0$, one keep the type of the singularity ($x_1\cdots x_r=0$), the other one smooth it ($x_1\cdots x_r=t$). Therefore, if we choose the log smooth deformation which smooths the singularities, we automatically get a semistable family.
  \item Generally, semistable Fano varieties are obstructed. However one can show that they are unobstructed under the log smooth deformation. In fact their log obstruction space vanish (Proposition \ref{cor_vanishing_0}).
\end{enumerate}
In \cite{Tziolas2015}, Tziolas proves the vanishing of the log obstruction space of a $d$-semistable normal crossing Fano variety by normalizing the singularities and reducing the vanishing theorem to the Akizuki-Nakano-Kodaira vanishing theorem of log pairs. However, in the semistable case, the method of normalizing becomes combinationally more complicated. On the other hand, the method of normalizing is not so effective in positive characteristic. In this paper, we use the full power of log geometry which is introduced by Fontaine-Illusie and is developed by K. Kato \cite{KKato1988}. By using K. Kato's decomposition theorem of log de Rham complex (Theorem \ref{thm_Kato}), we are able to prove a general Akizuki-Nakano-Kodaira type vanishing theorem for semistable log varieties (Theorem \ref{thm_vanishing_p} and Corollary \ref{cor_vanishing_0}) \textit{over any field}. The vanishing of log obstruction space of semistable log Fano varieties is an easy consequence.

As long as the log deformation is unobstructed, we are able to lift the semistable log variety over a complete ring (provided a lifting of an ample line bundle). Then we prove the limit preserving property of semistable log smooth morphisms (Proposition \ref{prop_limit_preserving}) and use Artin's approximation theorem to extend the family to a variety base.

This paper is organized as follows:

In section 2 we introduce the notions in log geometry which are necessary for the paper. We also introduce K. Kato's obstruction theory of log smooth deformations.

In section 3 we proves an Akizuki-Nakano-Kodaira type vanishing theorem for semistable log varieties (Theorem \ref{thm_vanishing_p} and Corollary \ref{cor_vanishing_0}) by using K. Kato's decomposition theorem of log de Rham complex (Theorem \ref{thm_Kato}). As a consequence, we prove that the obstruction space of the log smooth deformation of a semistable log Fano variety vanishes.

In section 4 we prove the limit preserving property of the semistable log smooth morphisms (Proposition \ref{prop_limit_preserving}) and prove the main results of this paper (Theorem \ref{thm_main1} and Theorem \ref{thm_main2}).

\textbf{Notations:}

We mainly follow the notions and notations in \cite{KKato1988} with some exceptions:
\begin{itemize}
  \item We use the capital letters $X$, $Y$, etc. to denote log schemes. If $X$ is a log scheme, we denote $\underline{X}$ be the underlying scheme and $\alpha_X:\sM_X\rightarrow\sO_X$ be the log structure.
  \item We denote a log cotangent sheaf by $\Omega$ instead of $\omega$ as in \cite{KKato1988}.
\end{itemize}
\section{Logarithmic Geometry and Logarithmic Deformation}
A log scheme is a triple $X=(\underline{X},\sM_X,\alpha)$ consists of
\begin{itemize}
  \item a scheme $\underline{X}$,
  \item a sheaf $\sM_X$ (on the \'etale topology on $X$) of monoids and
  \item a morphism of sheaf of monoids $\alpha:\sM_X\rightarrow\sO_X$ from $\sM_X$ to the multiplication monoid $(\sO_X,\times)$ such that $$\alpha|_{\alpha^{-1}\sO_X^\ast}:\alpha^{-1}\sO_X^\ast\rightarrow\sO_X^\ast$$ is an isomorphism.
\end{itemize}
$(\sM_X,\alpha)$ is called the log structure of the log scheme. Usually, $\alpha$ is omitted in the notation if there is no danger of ambiguity.

For any morphism of sheaf of monoids $\alpha:\sP\rightarrow\sO_X$, one can associate to it a log structure $\sP^a\rightarrow\sO_X$ functorially. If $\sP$ is a constant sheaf of monoids, then it is called a chart of $\sP^a$.

A typical example of log scheme is the log pair: let $(X,D)$ be a pair consists of a scheme $X$ and a reduced subscheme $D$ on $X$ of codimension 1. We can construct a log structure $\sM_D$ on $X$ by
$$\sM_D(U)=\{f\in\sO_X(U)|\textrm{supp}(f)\subseteq D\times_XU\}.$$

A morphism of log scheme $f:X\rightarrow Y$ is consist of a morphism of the underlying schemes $\underline{f}:\underline{X}\rightarrow\underline{Y}$ and a morphism of sheaf of monoids $f^\ast:f^{-1}\sM_Y\rightarrow\sM_X$. We say that $f$ is strict if $(f^{-1}\sM_Y)^a\simeq\sM_X$.
$f$ is called log smooth if
\begin{enumerate}
  \item the underlying morphism is locally of finite presentation and
  \item for any commutative diagram
$$
\xymatrix{
T'\ar[r] \ar[d]^i & X \ar[d]^f \\
T \ar[r] \ar@{.>}[ur]^g & Y
}
$$
of log schemes such that $i$ is a strict closed immersion whose ideal of definition $I$ satisfies $I^2=(0)$, there exists locally on $T$ a dotted arrow $g$ rendering the diagram commutative.
\end{enumerate}
A typical example of log smooth morphism is the semistable morphism:
\begin{defn}\label{defn_semistable}
Let $X$ and $Y$ be Noetherian schemes. Suppose that $D_X\subseteq X$ and $D_Y\subseteq Y$ are reduced Cartier divisors. A morphism of pairs $f:(X,D_X)\rightarrow (Y,D_Y)$ is called \textbf{semistable} if the following two conditions hold.
\begin{enumerate}
  \item $X$, $Y$ are regular and $D_X\subseteq X$, $D_Y\subseteq Y$ are normal crossing divisors,
  \item for each point $x\in X$ with $f(x)=y$, there are \'etale morphisms $U\rightarrow X$ and $V\rightarrow Y$ which send $x'\in U$ and $y'\in V$ to $x$ and $y$ respectively and a morphism $g$ rendering the diagram
$$\xymatrix{
U \ar[r]^{x'\mapsto x} \ar[d]^g & X \ar[d]^f \\
V \ar[r]^{y'\mapsto y} & Y
}$$
commutative. Here $g$ is formally isomorphic to the spectrum of
$$\widehat{\sO_{V,x}}\rightarrow\widehat{\sO_{V,x}}[[x_1,\cdots,x_n]]/I$$
$$f\mapsto f$$
where the ideal $I$ is generated by
$$r_i-\prod_{j=l_{i-1}+1}^{l_i}x_j.$$
Here $0=l_0<l_1\cdots<l_m\leq n$ and $r_i$ are the defining functions of the formal branches of $D_Y\times_YV$ in $V$.
\end{enumerate}

\end{defn}
A semistable morphism is log smooth if $X$ and $Y$ are endowed with the log structures induced by the divisors $D_X$ and $D_Y$. This is a direct consequence of the following criterion of K. Kato.
\begin{thm}\label{thm__log_smooth_chart}(\cite{KKato1988}, Theorem 3.5)
Let $f:X\rightarrow Y$ be a morphism of fine log schemes. Then $f$ is log smooth if and only if given any \'etale local chart $Q\rightarrow \sM_Y$, there exist an \'etale local chart $P\rightarrow \sM_X$, and a chart of $f$, $Q\rightarrow P$, such that
\begin{description}
  \item[(a)] $Ker(Q^{gp}\rightarrow P^{gp})$ and the torsion part of $Coker(Q^{gp}\rightarrow P^{gp})$ are finite groups of order invertible on $X$, and
  \item[(b)] the induced morphism $X\rightarrow Y\times_{Spec\mathbb{Z}[Q]}Spec\mathbb{Z}[P]$ is smooth (in the usual sense).
\end{description}
\end{thm}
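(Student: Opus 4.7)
The proof splits into two directions of rather different character, and my plan is to handle them separately.

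For the implication $(a) + (b) \Rightarrow$ log smoothness, I would verify the infinitesimal lifting criterion directly. Given a strict closed immersion $i:T'\hookrightarrow T$ with square-zero ideal $I$ and a commutative square $T'\to X$ over $T\to Y$, the task is to produce an étale local dotted arrow $g:T\to X$. The chart of $Y$ gives a map $T\to \spec\mathbb{Z}[Q]$, so by condition (b) it suffices to first lift the given map $T'\to\spec\mathbb{Z}[P]$ to a map $T\to\spec\mathbb{Z}[P]$ over $\spec\mathbb{Z}[Q]$: classical smoothness of $\underline{X}\to\underline{Y}\times_{\spec\mathbb{Z}[Q]}\spec\mathbb{Z}[P]$ will then produce the underlying arrow $\underline{g}$. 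That monoid-theoretic lifting amounts to extending a map $P\to\Gamma(T',\sM_{T'})$ to $P\to\Gamma(T,\sM_T)$ compatibly with $Q$, and the obstruction lies in a group built from $I$ and $\mathrm{Hom}(P^{gp}/Q^{gp},-)$; condition (a) together with the invertibility of the relevant orders on $X$ kills this obstruction (multiplication by an integer annihilating the torsion becomes invertible).

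For the converse, the plan is to construct a chart $P$ étale locally around a geometric point $\bar{x}\in X$. Since $f$ is log smooth it is locally of finite presentation, so $\overline{\sM}_{X,\bar{x}}$ is a finitely generated integral monoid; choose lifts of a finite generating set to $\sM_{X,\bar{x}}$ and adjoin them to the image of $Q$ to form a candidate $P$, extending étale locally to a chart $P\to\sM_X$. To verify (a), I would test log smoothness against the square-zero thickening $\spec\mathbb{Z}[P]\to\spec\mathbb{Z}[P][\varepsilon]/(\varepsilon^2)$ (and similar thickenings of group rings $\mathbb{Z}[G]$ of finite cyclic groups $G$): the existence of a lift forces precisely that the kernel of $Q^{gp}\to P^{gp}$ and the torsion of the cokernel are of order invertible on $X$, as otherwise a non-split torsion obstruction would survive. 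Once (a) holds, the morphism $X\to Y\times_{\spec\mathbb{Z}[Q]}\spec\mathbb{Z}[P]$ is \emph{strict} by construction, and a strict log smooth morphism coincides with a classically smooth one — so (b) follows from the log smoothness of $f$ and of the model $Y\times_{\spec\mathbb{Z}[Q]}\spec\mathbb{Z}[P]\to Y$ granted by the already-proved direction.

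The main obstacle I anticipate is the second direction, specifically the extraction of the torsion condition (a) from the bare infinitesimal lifting property. Unlike the first direction, where one has concrete obstruction classes to kill, here one must reverse-engineer monoidal data from liftings against carefully chosen test objects; the correct test thickenings are not obvious, and one must confirm that étale locality (rather than Zariski locality) on $X$ is genuinely needed to absorb Galois-type twists when the invertible orders are nontrivial. Once this monoid-theoretic bookkeeping is in place, the rest reduces to standard manipulations with fine charts and the structure theory of finitely generated integral monoids.
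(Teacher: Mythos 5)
This statement is quoted in the paper from Kato (Theorem 3.5 of \cite{KKato1988}) without proof, so I am comparing your proposal with Kato's own argument. Your first direction is fine in outline: lifting the monoid map $P\to\Gamma(T',\sM_{T'})$ to $\Gamma(T,\sM_T)$ compatibly with $Q$ has its obstruction in an extension group of $\mathrm{Coker}(Q^{gp}\to P^{gp})$ with values in $I$, the consistency over $Q$ needs homomorphisms from $\mathrm{Ker}(Q^{gp}\to P^{gp})$ to $1+I\simeq I$ to vanish, and both are killed because the relevant orders act invertibly on $I$; then classical smoothness in (b) produces the lift. The genuine gap is in the converse. Your deduction of (b) -- ``$X\to Y\times_{\spec\bZ[Q]}\spec\bZ[P]$ is log smooth because $f$ and the model $Y\times_{\spec\bZ[Q]}\spec\bZ[P]\to Y$ are log smooth'' -- is a non sequitur: a morphism is not (log) smooth merely because it factors a (log) smooth morphism through a (log) smooth one. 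Indeed, for a chart chosen as you propose (lifts of generators of $\overline{\sM}_{X,\bar x}$ adjoined to the image of $Q$), condition (b) generally fails. Concretely, let $X=\spec k[t]$ with the log structure of the divisor $\{t=0\}$ and $Y=\spec k$ with trivial log structure ($Q=0$). Near the origin $P=\bN^2$ with $e_1\mapsto t$, $e_2\mapsto 1+t$ is a legitimate chart (since $1+t$ is a unit there) and satisfies (a); the induced map $X\to\spec k[\bN^2]=\mathbb{A}^2$, $t\mapsto(t,1+t)$, is strict, but it is a closed immersion of a curve into a surface, hence not smooth. So nothing in your recipe selects a chart for which (b) holds, and the step you flag as unproblematic is exactly where the content of the theorem lies.

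What is missing is the actual mechanism of Kato's proof: first show that $\Omega_{X/Y}$ is locally free, say of rank $r$ at $\bar x$; then choose $t_1,\dots,t_r\in\sM_{X,\bar x}$ whose images $\textrm{dlog}\,t_i$ form a basis of $\Omega_{X/Y}\otimes k(\bar x)$ (possible since $\sO_X^\ast\subseteq\sM_X$, so elements $\textrm{dlog}(1+a)\equiv da$ make $\textrm{dlog}(\sM^{gp}_{X,\bar x})$ span the fiber). The map $Q\oplus\bN^r\to\sM_{X,\bar x}$, $e_i\mapsto t_i$, induces $X\to Y\times_{\spec\bZ[Q]}\spec\bZ[Q\oplus\bN^r]$, which is log \'etale near $\bar x$ by the differential criterion (both sides are log smooth over $Y$ and the map on log differentials is an isomorphism by the choice of basis). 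Finally one enlarges $Q\oplus\bN^r$ inside $Q^{gp}\oplus\bZ^r$ (the preimage of $\sM_{X,\bar x}$ under a suitable finitely generated subgroup, as in Kato's chart-existence lemma) to an honest chart $P$ of $\sM_X$ compatible with $Q$ and with $P^{gp}=Q^{gp}\oplus\bZ^r$; the induced map is then strict and log \'etale, hence \'etale, so (b) holds, and (a) holds automatically because the kernel is trivial and the cokernel $\bZ^r$ is torsion free. This also shows that your plan of ``forcing (a) by testing against chosen thickenings'' is off target: (a) is not a property of every chart of a log smooth morphism, it is arranged by the construction of $P$. A further, smaller, point: adjoining lifts of generators of $\overline{\sM}_{X,\bar x}$ to the image of $Q$ need not produce a chart at all (the associated log structure can be strictly larger than $\sM_X$, e.g.\ when a relation in $\overline{\sM}_{X,\bar x}$ only lifts up to a unit not accounted for by $P$); the preimage-of-a-subgroup construction is what guarantees a chart compatible with $Q$.
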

Let $f:X\rightarrow Y$ be a semistable morphism. By restricting the log structures on the fibers of $f$ at $y\in Y$, we get a log variety $X_y$ which is log smooth over $(k,\mathbb{N}^r\mapsto 0)$ where $r$ is the number of formal branches of $D_Y$ at $y$. This suggests the following definition.
\begin{defn}\label{defn_semistable_log}
A log smooth variety $X$ over a field $(k,\mathbb{N}^r\mapsto 0)$ is of semi-stable type if for every point $x\in X$, there exist
\begin{itemize}
  \item a pointed scheme $(U,u)$,
  \item an \'etale morphism $U\rightarrow X$ which sends $u$ to $x$ (we do not require that $k(u)\simeq k(x)$) and
  \item a diagram of log schemes
$$\xymatrix{
U\ar[d]\ar[r]^-g & \spec(k[x_1,\cdots,x_n]/(x_1\cdots x_{l_1},x_{l_1+1}\cdots x_{l_2},\cdots,x_{l_{r-1}+1}\cdots x_n), \mathbb{N}^n) \ar[dl]^\pi\\
(k,\mathbb{N}^r\mapsto 0), & \\
}
$$
\end{itemize}
where $g$ is smooth and the log structure of $$\spec(k[x_1,\cdots,x_n]/(x_1\cdots x_{l_1},x_{l_1+1}\cdots x_{l_2},\cdots,x_{l_{r-1}+1}\cdots x_n)$$
is induced by $$\alpha:\mathbb{N}^n\rightarrow k[x_1,\cdots,x_n]/(x_1\cdots x_{l_1},x_{l_1+1}\cdots x_{l_2},\cdots,x_{l_{r-1}+1}\cdots x_n),\quad \alpha(e_i)=x_i.$$
Here $e_i=(0,\cdots, 1, \cdots, 0)\in \mathbb{N}^s$ where $1$ is placed on the $i$-th component.

Under a suitable order of $x_1,\cdots,x_n$ and $0=l_0\leq l_1<\cdots<l_r=n$, the map on the log structure of the morphism $\pi$ is induced by
$$\mathbb{N}^r\rightarrow \mathbb{N}^n,\quad e_i\mapsto e_{l_{i-1}+1}+\cdots e_{l_{i}}.$$
In this case we say that $X$ is \textbf{semistable log smooth} over $(k,\mathbb{N}^r\mapsto 0)$.
\end{defn}

Given a morphism of log schemes $f:X\rightarrow Y$, the relative cotangent sheaf of $f$ is defined by
$$\Omega_{X/Y}:=\Omega_{\underline{X}/\underline{Y}}\oplus(\sM_X^{gp}\otimes_{\mathbb{Z}}\sO_X)/\sim,$$
where $\sim$ is a $\sO_X$-submodule generated by
\begin{enumerate}
  \item $(d\alpha(a),0)-(0,\alpha(a)\otimes a)$ with $a\in\sM_X$, and
  \item $(0,1\otimes a)$ with $a\in \textrm{Image}(f^{-1}(\sM_Y)\rightarrow\sM_X)$.
\end{enumerate}
The canonical morphism $\sM^{gp}_X\rightarrow\Omega_{X/Y}$ is denoted by $\textrm{dlog}$. If $f$ is log smooth, then $\Omega_{X/Y}$ is locally free. The log cotangent sheaf controls the log smooth deformation of a log smooth variety (\cite{KKato1988}, \cite{FKato1996}).

\begin{thm}[\cite{KKato1988}, Proposition 3.14]\label{thm_log_deformation}
Let $f:X\rightarrow Y$ be a smooth morphism between fine log schemes and let $i:Y\rightarrow Y'$ be an strict closed immersion such that $Y$ is defined in $Y'$ by a square zero ideal $I$. Then there is an obstruction class $ob_{f,i}\in H^2(X,(\Omega_{X/Y})^\vee\otimes I)$ such that $ob_{f,i}=0$ if and only if there exists a log smooth morphism of fine log schemes $f':X'\rightarrow Y'$ whose restriction on $Y$ is isomorphic to $f$.
\end{thm}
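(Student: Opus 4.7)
The plan is to mimic the classical construction of the obstruction class for smooth deformations, adapted to the logarithmic setting via K.~Kato's chart criterion (Theorem \ref{thm__log_smooth_chart}). The three ingredients we need are: (i) étale local existence of liftings, (ii) a transitive affine action of $H^0(U,(\Omega_{X/Y})^\vee\otimes I)$ on the set of local liftings, and (iii) a Čech gluing argument that places the obstruction in $H^2$.

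For (i), I would first reduce to the case where $X$ admits an étale chart $Q\to P$ as in Theorem \ref{thm__log_smooth_chart}. Then, étale locally on $X$, the morphism $f$ factors as $X\to Y\times_{\spec\mathbb{Z}[Q]}\spec\mathbb{Z}[P]$, where the first arrow is classically smooth. Since $i:Y\to Y'$ is strict and $Q$ is the same chart for both $Y$ and $Y'$, the scheme $Y'\times_{\spec\mathbb{Z}[Q]}\spec\mathbb{Z}[P]$ gives a log smooth lift of $Y\times_{\spec\mathbb{Z}[Q]}\spec\mathbb{Z}[P]$. A local lift of $X$ is then obtained by classical smoothness of the étale morphism across a square-zero thickening. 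For (ii), given two local liftings $X'_1,X'_2$ of the same open $U\subseteq X$ with a fixed identification of the reductions, I would check directly from the definition of $\Omega_{X/Y}$ that the difference of the two structure maps (including the log structure maps $\sM_{X'_k}\to\sO_{X'_k}$) lands in $\mathrm{Hom}_{\sO_U}(\Omega_{U/Y},I|_U)$, and conversely that any such derivation produces a new lifting. This is essentially the log-variant of the usual computation and relies only on the definition of the relative log cotangent sheaf given above.

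With (i) and (ii) in hand, I would proceed in the standard way. Choose an étale cover $\{U_\alpha\}$ of $X$, local lifts $U'_\alpha$ over $Y'$, and — using (i) applied to the identity across the square-zero thickening $Y\subseteq Y'$ — isomorphisms $\phi_{\alpha\beta}:U'_\alpha|_{U_{\alpha\beta}}\simeq U'_\beta|_{U_{\alpha\beta}}$ reducing to the identity on $U_{\alpha\beta}$. On triple overlaps the composition $\phi_{\gamma\alpha}\circ\phi_{\beta\gamma}\circ\phi_{\alpha\beta}$ is an automorphism of a lift of $U_{\alpha\beta\gamma}$ reducing to the identity, hence by (ii) corresponds to an element of $H^0(U_{\alpha\beta\gamma},(\Omega_{X/Y})^\vee\otimes I)$. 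These elements form a Čech $2$-cocycle on $\{U_\alpha\}$; its class $ob_{f,i}\in H^2(X,(\Omega_{X/Y})^\vee\otimes I)$ is independent of the choices, since (ii) implies that different local lifts and different gluing isomorphisms modify the cocycle by a coboundary. If $ob_{f,i}=0$, one can adjust the $\phi_{\alpha\beta}$ by the corresponding Čech $1$-chain so that the cocycle condition holds strictly, which allows the $U'_\alpha$ to be glued (as fine log schemes) into a global lift $X'\to Y'$. Conversely, a global lift produces an honest cocycle equivalent to zero.

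The main technical hurdle is step (i): carefully choosing étale local charts so that the classical smoothness of $X\to Y\times_{\spec\mathbb{Z}[Q]}\spec\mathbb{Z}[P]$ can be leveraged while ensuring that the resulting local lifts are \emph{fine} log schemes with a compatible chart map from $Y'$. Once the chart criterion is invoked correctly, everything else is a formal repetition of the classical deformation-theoretic machinery, with $\Omega_{X/Y}$ in the logarithmic sense playing the role of the ordinary cotangent sheaf.
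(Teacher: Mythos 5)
Your proposal is the standard argument — étale-local liftings via Kato's chart criterion, the identification of automorphisms of a lift (restricting to the identity) with log derivations, and a \v{C}ech $2$-cocycle gluing — which is precisely how the cited source proves this (K.~Kato, Proposition 3.14; worked out in detail in F.~Kato's log smooth deformation theory), the paper itself stating the result without proof. The only imprecision is in your step (ii): what the gluing step actually requires is that any two lifts of an affine piece are (locally) isomorphic over $Y'$ and that automorphisms of a lift reducing to the identity are classified by $H^0(U,(\Omega_{X/Y})^\vee\otimes I)$, rather than a ``difference of structure maps'' of two abstract liftings; with that rephrasing the argument is correct and matches the cited proof.
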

\begin{rmk}\label{rmk_lifting}
Let $Y$ be a fine log scheme and  $i:Y\rightarrow Y'$ be a strict thickening (i.e., $i$ is a closed immersion defined by a nilpotent ideal sheaf where $i$ is strict as a log morphism). Let $X$ be a fine log scheme which is log smooth over $Y$. If $X$ is affine, then there exists a unique lifting (up to non-unique isomorphism) over $Y'$ (\cite{KKato1988}, Proposition 3.14). \'Etale locally a log smooth lifting can be described as follows (loc. cit.).

Choosing a chart $P$ of $X$ and a chart $Q$ of $Y$ as in Theorem \ref{thm__log_smooth_chart}, we get the following diagram
$$\xymatrix{
X \ar@{.>}[r] \ar[d]^f & X' \ar@{.>}[d]^{f'} \\
\spec(\bZ[P])\times_{\spec(\bZ[Q])}Y\ar[r] \ar[d] & \spec(\bZ[P])\times_{\spec(\bZ[Q])}Y' \ar[d] \\
k \ar[r] & Y'
},
$$
where $f$ is strict and \'etale. We can complete the diagram by the dotted arrows so that $f'$ is strict, \'etale and the square on the top is a fiber product (SGA I \cite{Grothendieck1971}, Expos\'e 1). Here $X'$ is a log smooth lifting of $X$ to $Y'$.
\end{rmk}

\section{Kodaira-Akizuki-Nakano Vanish for Semistable Log Varieties}
In this section we prove that semistable log Fano varieties are unobstructed under the log smooth deformations. To prove this we need to establish the Kodaira-Akizuki-Nakano vanishing theorem for semistable log varieties. As corollaries, we prove the main results in this paper.
\begin{defn}
Let $k$ be a field and $X$ be a variety over $k$. $X$ is said to be semistable if for each point $x\in X$, there exists an \'etale morphism $U\rightarrow X$ which sends $u\in U$ to $x$ (we do not require that $k(u)\simeq k(x)$) such that $$\widehat{\sO_{U,u}}\simeq k(u)[[x_1,\cdots,x_n]]/(\prod_{j=l_{i-1}+1}^{l_i}x_j).$$
Here $0=l_0<l_1\cdots<l_r\leq n$.
\end{defn}
This definition allows singularities like $x^2+y^2=0$ defined over $k=\mathbb{R}$.

Let $X$ be a semistable variety over $k$. Since it is \'etale locally a product of normal crossing singularities, $X$ is Gorenstein and the dualizing sheaf $\omega_{X/k}$ is an invertible sheaf.
\begin{defn}
A semistable variety $X$ is called Fano if its dualizing sheaf $\omega_{X/k}$ is anti-ample.
\end{defn}
For a semistable log variety, the log canonical sheaf is isomorphic to the dualizing sheaf of the underlying variety.
\begin{lem}\label{lem_log_canonical}
Let $X$ be a semistable log variety over $\mathbf{k}=(k,\mathbb{N}^r\mapsto 0)$, then $\bigwedge^{\dim X }\Omega_{X/\mathbf{k}}\simeq\omega_{X/k}$.
\end{lem}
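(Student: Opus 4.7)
The claim is \'etale local on $X$. Since $X$ is Gorenstein (being semistable) and $\Omega_{X/\mathbf{k}}$ is locally free of rank $\dim X$ (by log smoothness of $X\to\mathbf{k}$), both sides are invertible, and by Definition~\ref{defn_semistable_log} I may replace $X$ by a strict \'etale neighborhood $U$ admitting a smooth strict morphism $g\colon U\to Y$ to the standard local model $Y=\spec\bigl(k[x_1,\ldots,x_n]/(f_1,\ldots,f_r)\bigr)$, where $f_i=\prod_{j=l_{i-1}+1}^{l_i} x_j$, equipped with its canonical $\mathbb{N}^n$ log structure and mapped to $\mathbf{k}=(k,\mathbb{N}^r\mapsto 0)$ via $e_i\mapsto\sum_{j=l_{i-1}+1}^{l_i}e_j$. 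Being strict and classically smooth, $g$ is log smooth, yielding a short exact sequence $0\to g^*\Omega_{Y/\mathbf{k}}\to\Omega_{U/\mathbf{k}}\to\Omega_{U/Y}\to 0$, while the classical smoothness of $g$ of relative dimension $d:=\dim X-(n-r)$ gives $\omega_{U/k}\simeq g^*\omega_{Y/k}\otimes\bigwedge^{d}\Omega_{U/Y}$. Taking top wedges reduces the lemma to $\bigwedge^{n-r}\Omega_{Y/\mathbf{k}}\simeq\omega_{Y/k}$ on the local model $Y$.

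By Kato's chart description (Theorem~\ref{thm__log_smooth_chart}), $\Omega_{Y/\mathbf{k}}\simeq\sO_Y\otimes_{\mathbb{Z}}(\mathbb{Z}^n/\mathbb{Z}^r)$, i.e.\ the free $\sO_Y$-module on $d\log x_1,\ldots,d\log x_n$ modulo the $r$ relations $\sum_{j=l_{i-1}+1}^{l_i}d\log x_j=0$; hence $\bigwedge^{n-r}\Omega_{Y/\mathbf{k}}$ is trivial, generated by $\omega:=\bigwedge_{j\notin\{l_1,\ldots,l_r\}} d\log x_j$. On the other hand, the $f_i$ involve pairwise disjoint variables, hence form a regular sequence in $k[x_1,\ldots,x_n]$, so $Y\subset\mathbb{A}^n_k$ is a global complete intersection and adjunction gives $\omega_{Y/k}\simeq\omega_{\mathbb{A}^n/k}|_Y\otimes\det N_{Y/\mathbb{A}^n}^\vee$, an invertible sheaf generated by the Poincar\'e residue $\rho:=\textrm{Res}\bigl(\tfrac{dx_1\wedge\cdots\wedge dx_n}{f_1\cdots f_r}\bigr)$.

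The plan is to send $\rho\mapsto\omega$. The heuristic is the formal identity
$$\frac{dx_1\wedge\cdots\wedge dx_n}{f_1\cdots f_r}=d\log x_1\wedge\cdots\wedge d\log x_n,$$
so that Poincar\'e residue along $Y$ amounts to killing the ``trivial'' directions $d\log f_i=\sum_{j=l_{i-1}+1}^{l_i}d\log x_j$---exactly the $r$ relations cutting out $\Omega_{Y/\mathbf{k}}$ from $\sO_Y\otimes\mathbb{Z}^n$. Well-definedness is verified component by component: on each irreducible component $Y_{j_1,\ldots,j_r}=V(x_{j_1},\ldots,x_{j_r})$ with $j_i\in[l_{i-1}+1,l_i]$, both $\rho$ and $\omega$ restrict to a common unit multiple of $\bigwedge_{k\notin\{j_1,\ldots,j_r\}}dx_k\bigm/\prod_{k\notin\{j_1,\ldots,j_r\}}x_k$, where $\omega|_{Y_{j_1,\ldots,j_r}}$ is computed using the log relations $d\log x_{j_i}=-\sum_{k\in[l_{i-1}+1,l_i],\,k\ne j_i} d\log x_k$. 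The main technical subtlety is interpreting $d\log x_j$ on a component where $x_j$ vanishes as a function; this is legitimate because $d\log$ is defined via $\sM_Y^{gp}$ rather than via multiplicative fractions in $\sO_Y$, so $d\log x_j$ is an honest section of $\Omega_{Y/\mathbf{k}}$. Once sign conventions are coherently fixed across components, the local isomorphism on $Y$ pulls back via $g$ and glues to the desired global isomorphism $\omega_{X/k}\simeq\bigwedge^{\dim X}\Omega_{X/\mathbf{k}}$.
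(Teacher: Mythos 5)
Your proof takes essentially the same route as the paper's: both identify the two sides on the standard local model by exhibiting explicit generators --- the top log form built from the $d\log x_j$ (using $\Omega_{Y/\mathbf{k}}\simeq\sO_Y\otimes(\mathbb{Z}^n/\mathbb{Z}^r)$) versus the adjunction/Poincar\'e-residue generator of the dualizing sheaf --- and then transport the identification along the smooth strict chart morphism; your write-up is in fact somewhat more detailed, since you treat the general monomial complete intersection and the reduction along $g$ explicitly, where the paper only writes out one normal-crossings factor. The one genuinely delicate point, that the local isomorphisms are canonical enough (independent of the chart, the coordinate ordering, and the choice of omitted $d\log x_{l_i}$) to glue --- which matters because an isomorphism of line bundles is not an \'etale-local statement --- is asserted rather than verified in your final sentence, but the paper omits exactly the same check (``we omit the compatible verification here''), so you match its level of rigor.
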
\label{thm_Kato}
\begin{proof}
For simplicity, let us consider the normal crossing singularity. Denote by $Z=\{x_1\cdots x_r=0\}$ a closed subspace of $U=\spec(k[x_1,\cdots,x_n])$. Then $\bigwedge^{n-r}\Omega_{Z/\mathbf{k}}$ is an invertible sheaf generated by
$$\frac{dx_1}{x_1}\wedge\cdots\wedge\frac{dx_r}{x_r}\wedge dx_{r+1}\wedge\cdots\wedge dx_n.$$
The dualizing sheaf $\omega_{Z/k}$ has the same description by the adjunction formula. Since semistable varieties are locally products of normal crossing singularities, the lemma is proved (We omit the compatible verification here).
\end{proof}
\begin{thm}\label{derham_split}(\cite{KKato1988} Theorem 4.12)
Let $k$ be a field of characteristic $p>0$ and let $X$ be a semistable log smooth variety over $\mathbf{k}=(k,\mathbb{N}^r\mapsto 0)$. If $X$ has a log smooth lifting over $(C_2(k),\mathbb{N}^r\mapsto 0)$, then there is an isomorphism
$$\tau_{<p}F_\ast\Omega^{\bullet}_{X/\mathbf{k}}\simeq\oplus_{0\leq i<p}\Omega^i_{X'/\mathbf{k}}[-i]$$
in $D(X')$. Here $F:X\rightarrow X'$ is the relative Frobenius over $\mathbf{k}$.
\end{thm}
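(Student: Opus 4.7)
The strategy is to adapt the Deligne--Illusie argument to the log setting. Two ingredients are required: a log Cartier isomorphism $C^{-1}\colon\Omega^i_{X'/\mathbf{k}}\xrightarrow{\sim}\mathcal H^i(F_*\Omega^\bullet_{X/\mathbf{k}})$ on cohomology sheaves, and, using the hypothesized $C_2(k)$-lift, \'etale local liftings of the relative Frobenius that produce a splitting of the truncated log de Rham complex in $D(X')$. I would first establish $C^{-1}$: the statement is \'etale local on $X$, so one reduces to the semistable chart of Definition~\ref{defn_semistable_log}, in which $X$ is strictly \'etale over $\spec k[x_1,\dots,x_n]/I$ equipped with the log structure coming from $\mathbb{N}^n$. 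In this local model $\Omega^\bullet_{X/\mathbf{k}}$ is a Koszul-type complex generated by $dx_j$ (for the smooth directions) and $d\log x_j$ (for the chart generators), and a direct calculation gives $C^{-1}$: the classical Cartier isomorphism handles the smooth directions, while $d\log x_j$ pulls back under $F$ to $p\cdot d\log x_j$ and supplies the logarithmic summand.

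With the log Cartier isomorphism in hand, the next step is to construct the splitting. \'Etale locally over $X'$, the infinitesimal lifting property for log smooth morphisms (Theorem~\ref{thm_log_deformation}, applied on affine opens where the obstruction automatically vanishes) produces a log lift $\tilde F\colon\tilde U\to\tilde U'$ of the relative Frobenius. The derivative $d\tilde F\colon F^*\Omega^1_{\tilde U'}\to\Omega^1_{\tilde U}$ is divisible by $p$, since modulo $p$ the map $F$ is the Frobenius and acts as multiplication by $p$ on the chart monoid; dividing by $p$ produces an $\sO_{X'}$-linear map $\phi_{\tilde F}\colon\Omega^1_{X'/\mathbf{k}}\to F_*\Omega^1_{X/\mathbf{k}}$ that lands in closed forms and whose composition with $C^{-1}$ is the identity. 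Two choices of $\tilde F$ differ by a map divisible by $p^2$ whose $p$-quotient is of the form $dh$, so the locally defined $\phi_{\tilde F}$ give compatible \v Cech data and glue to a morphism $\Omega^1_{X'/\mathbf{k}}[-1]\to F_*\Omega^\bullet_{X/\mathbf{k}}$ in $D(X')$. Taking graded-multiplicative wedge extensions $\phi^{\wedge i}$ for $0\leq i<p$ and summing gives the required map; by construction it induces $C^{-1}$ on cohomology, hence is a quasi-isomorphism onto $\tau_{<p}F_*\Omega^\bullet_{X/\mathbf{k}}$. The restriction $i<p$ enters because the multiplicative extension effectively forces denominators $i!$.

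The main obstacle is the log-theoretic bookkeeping in the semistable case, where $\Omega^1_{X/\mathbf{k}}$ mixes $dx_j$ and $d\log x_j$ generators and the monoid map $\mathbb{N}^r\to\mathbb{N}^n$ of Definition~\ref{defn_semistable_log} sends the chart generators downstairs to nontrivial sums of chart generators upstairs. One must verify that a log Frobenius lift $\tilde F$ sends each chart generator $x_j$ to $x_j^p(1+pu_j)$, so that $\tilde F^*(d\log x_j)=p\,d\log x_j+p\,du_j$ is genuinely divisible by $p$ in the log cotangent sheaf, and that the quotient $\phi_{\tilde F}$ recovers $d\log x_j$ under $C^{-1}$. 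This is the step that genuinely uses the hypothesis of a log smooth $C_2(k)$-lifting rather than a merely scheme-theoretic one, and it is the place where the semistable hypothesis on the monoid map intervenes. Once this computation is carried out, the comparison of two Frobenius lifts and the globalization follow the classical Deligne--Illusie template essentially verbatim.
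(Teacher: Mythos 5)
The paper gives no proof of this statement at all---it is quoted verbatim from Kato's Theorem 4.12---and your outline is exactly the argument of that cited source: the log Cartier isomorphism computed on the semistable local model, plus Deligne--Illusie-style local liftings of the relative Frobenius furnished by the $C_2(k)$-lift, divided by $p$ and glued via a \v{C}ech homotopy, with the wedge extension accounting for the truncation $<p$. This matches the intended proof; the only small corrections are that two local Frobenius lifts differ by a term divisible by $p$ (not $p^2$), whose $p$-quotient $h$ gives the homotopy $\phi_{\tilde F_1}-\phi_{\tilde F_2}=dh$, and that what the argument really needs is a log smooth lifting of the Frobenius twist $X'$ over $(C_2(k),\mathbb{N}^r\mapsto 0)$, which one extracts from the hypothesized lifting of $X$ by base change along a lift of Frobenius to $C_2(k)$.
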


\begin{thm}\label{thm_vanishing_p}
Let $k$ be a field of characteristic $p>0$ and let $X$ be a semistable log smooth variety over $\mathbf{k}=(k,\mathbb{N}^r\mapsto 0)$. Fix $\dim X<p$. Suppose that $X$ has a log smooth lifting over $(C_2(k),\mathbb{N}^r\mapsto 0)$. Then for any ample bundle $A$ on $X$, we have that:
$$H^i(X,\Omega^j_{X/\mathbf{k}}\otimes A)=0$$ for $\dim X<i+j<p$, $i>0$, and
$$H^i(X,\Omega^j_{X/\mathbf{k}}\otimes A^{-1})=0$$ for $i+j<inf\{p, \dim X\}$, $i<\dim X$.
\end{thm}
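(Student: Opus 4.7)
The plan is to adapt Deligne--Illusie's Frobenius-descent argument for Kodaira--Akizuki--Nakano vanishing, with Kato's decomposition (Theorem \ref{derham_split}) taking the place of the smooth version. By Lemma \ref{lem_log_canonical}, $\Omega^n_{X/\mathbf{k}} \simeq \omega_{X/k}$ with $n = \dim X$, and the wedge pairing $\Omega^j_{X/\mathbf{k}} \otimes \Omega^{n-j}_{X/\mathbf{k}} \to \Omega^n_{X/\mathbf{k}}$ is perfect because each $\Omega^j_{X/\mathbf{k}}$ is locally free on the log smooth $X$. Since $\underline{X}$ is Gorenstein, Serre duality identifies $H^i(X, \Omega^j_{X/\mathbf{k}} \otimes A)^\vee$ with $H^{n-i}(X, \Omega^{n-j}_{X/\mathbf{k}} \otimes A^{-1})$, so it suffices to prove the second vanishing.

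Fix $m < \inf\{p, n\}$. For any line bundle $B$ on $X$, let $B^{(1)}$ denote its Frobenius twist on $X'$, so $F^\ast B^{(1)} \simeq B^{\otimes p}$. Tensoring Kato's splitting by the locally free sheaf $B^{(1)}$ and applying the projection formula yields, for $m < p$,
$$\bigoplus_{i+j=m} H^j\!\bigl(X', \Omega^i_{X'/\mathbf{k}} \otimes B^{(1)}\bigr) \;\simeq\; \mathbb{H}^m\!\bigl(X, \Omega^\bullet_{X/\mathbf{k}} \otimes B^{\otimes p}\bigr).$$
Bounding the right side above via the Hodge-to-de Rham spectral sequence $E_1^{ij} = H^j(X, \Omega^i_{X/\mathbf{k}} \otimes B^{\otimes p}) \Rightarrow \mathbb{H}^{i+j}$ and rewriting the left via flat base change along Frobenius (whose effect on $k$-dimensions is multiplication by $[k:k^p]$) gives the central inequality
$$[k:k^p] \sum_{i+j=m} \dim_k H^i(X, \Omega^j_{X/\mathbf{k}} \otimes B) \;\leq\; \sum_{i+j=m} \dim_k H^i(X, \Omega^j_{X/\mathbf{k}} \otimes B^{\otimes p}),$$
valid for every line bundle $B$ and every $m < p$.

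Applying this inequality with $B = A^{-1}, A^{-p}, A^{-p^2}, \ldots$ and iterating gives, for every $e \geq 0$,
$$[k:k^p]^e \sum_{i+j=m} \dim_k H^i(X, \Omega^j_{X/\mathbf{k}} \otimes A^{-1}) \;\leq\; \sum_{i+j=m} \dim_k H^i(X, \Omega^j_{X/\mathbf{k}} \otimes A^{-p^e}).$$
For $(i, j)$ with $i + j = m < n$ we have $i \leq m < n$, so Serre duality identifies $H^i(X, \Omega^j_{X/\mathbf{k}} \otimes A^{-p^e})$ with the dual of $H^{n-i}(X, \Omega^{n-j}_{X/\mathbf{k}} \otimes A^{p^e})$, which is killed by Serre vanishing for $e$ sufficiently large. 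Hence the right side vanishes for $e \gg 0$, forcing each $H^i(X, \Omega^j_{X/\mathbf{k}} \otimes A^{-1}) = 0$ with $i + j = m$. The first vanishing follows by Serre duality.

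The main obstacle is verifying that Kato's quasi-isomorphism $\tau_{<p}F_\ast\Omega^\bullet_{X/\mathbf{k}} \simeq \bigoplus \Omega^i_{X'/\mathbf{k}}[-i]$ remains a quasi-isomorphism after tensoring with $B^{(1)}$ (so that the truncation $\tau_{<p}$ behaves well under the tensor product) and that the projection formula and flat base change apply cleanly in this log-schematic setting; the factor $[k:k^p]$, reflecting imperfectness of $k$, is a positive integer and does not interfere with the descent.
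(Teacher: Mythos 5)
Your argument is correct and is essentially the paper's own proof: both run the Deligne--Illusie-style Frobenius descent through Kato's decomposition (Theorem \ref{derham_split}) twisted by powers of $A$, combined with Serre vanishing and Serre--Grothendieck duality, the only real difference being that you deduce the positive-twist vanishing from the negative-twist one via the perfect pairing into $\Omega^{\dim X}_{X/\mathbf{k}}\simeq\omega_{X/k}$, whereas the paper runs the two inductions in parallel (deliberately, so the argument extends to log settings where the dualizing and log canonical sheaves differ). One small correction: the factor $[k:k^p]$ is spurious (and could be infinite) --- flat base change along the Frobenius of $k$ gives $H^j(X',\Omega^i_{X'/\mathbf{k}}\otimes B^{(1)})\simeq H^j(X,\Omega^i_{X/\mathbf{k}}\otimes B)\otimes_{k,F}k$, which has the \emph{same} $k$-dimension --- but since your descent only uses that vanishing of the right-hand side forces vanishing on the left, the proof is unaffected.
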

\begin{proof}
Consider the Frobenius square
$$\xymatrix{
X \ar[rrd]^{F_X} \ar[rd]^-F \ar[rdd]_f & & \\
 & X' \ar[r]^g \ar[d]^{f'} & X \ar[d]^f \\
 & \mathbf{k} \ar[r]^{F_{\mathbf{k}}}& \mathbf{k}.
}$$
\textbf{First Part.} Since $A$ is ample, we know that $$H^i(\Omega^j_{X/\mathbf{k}}\otimes A^{p^{m+1}})=0,\quad m\gg 0$$ for $\dim X<i+j<p$, $i>0$. From the above vanishing result, $$h^i(Rf_\ast(\Omega^{\bullet}_{X/Y}\otimes A^{p^{m+1}}))=0$$ for $i>\dim X$. Due to Theorem \ref{derham_split}, we have that
$$\tau_{<p}F_\ast(\Omega^{\bullet}_{X/\mathbf{k}}\otimes A^{p^{m+1}})\simeq\bigoplus_{0\leq i<p}\Omega^i_{X'/\mathbf{k}}\otimes g^\ast A^{p^m}[-i].$$
Hence for $\dim X<i<p$, we obtain that $$H^i(\Omega^i_{X'/\mathbf{k}}\otimes g^\ast A^{p^m})=0.$$ Since the underlying morphism of $g$ is an isomorphism, $$H^i(\Omega^j_{X/\mathbf{k}}\otimes A^{p^m})=0$$ for $\dim X<i+j<p$, $i>0$. Continuing this induction, we get that
$$H^i(X,\Omega^i_{X/\mathbf{k}}\otimes A)=0$$ for $\dim X<i+j<p$, $i>0$.

\textbf{Second Part.} The proof of the second part is almost the same. Since $X$ is Gorenstein, by Serre-Grothendieck duality (cf.\cite{Hartshorne1966}), we have that $$H^i(X,\Omega^j_{X/\mathbf{k}}\otimes A^{-p^m})=0,\quad m\gg 0,\quad i+j<\dim X.$$ This can be proved as follows:

Since $X$ is Gorenstein, by Serre-Grothendieck duality (cf.\cite{Hartshorne1966}), we have that $$H^i(X,\Omega^j_{X/\mathbf{k}}\otimes A^{-p^m})^{\vee}=H^{\dim X-i}(X,\omega_{X/k}\otimes(\Omega^j_{X/\mathbf{k}})^{\vee}\otimes A^{p^m})=0,\quad m\gg 0$$ for $0<i<\dim X$. Hence $$H^i(\Omega^{\bullet}_{X/\mathbf{k}}\otimes A^{-p^m})=0,\quad 0<i<\dim X,\quad m\gg 0.$$
Due to Theorem \ref{derham_split}, we know that
$$\tau_{<p}F_\ast(\Omega^{\bullet}_{X/\mathbf{k}}\otimes A^{-p^{m}})\simeq\bigoplus_{0\leq i<p}\Omega^i_{X'/\mathbf{k}}\otimes g^\ast A^{-p^{m-1}}[-i].$$
We have $$H^i(X,\Omega^j_{X/\mathbf{k}}\otimes A^{-p^{m-1}})=0,\quad i+j<\textrm{min}(p,\dim X),\quad i<\dim X.$$
By induction we get that
$$H^i(X,\Omega^j_{X/\mathbf{k}}\otimes A^{-1})=0$$ for $i+j<\textrm{min}\{p, \dim X\}$, $i<\dim X$.
\end{proof}
In this case, $\bigwedge^{\dim X }\Omega_{X/\mathbf{k}}$ is isomorphic to the dualizing sheaf $\omega_{X/k}$. Therefore the two vanishing results are related by Serre-Grothendieck duality. We present here the separated proofs because our argument hold for more general log smooth varieties (e.g. allowing divisors), for which the dualizing sheaf is different from the log canonical sheaf.

By the standard reduction mod $p$ technique, we have the following corollaries.
\begin{cor}\label{cor_vanishing_0}
Let $k$ be a field of characteristic $0$ and let $X$ be a semistable log smooth variety over $\mathbf{k}=(k,\mathbb{N}^r\mapsto 0)$. Then for any ample bundle $A$ on $X$, we have that
$$H^i(X,\Omega^j_{X/\mathbf{k}}\otimes A)=0$$ for $\dim X<i+j$, $i>0$, and
$$H^i(X,\Omega^j_{X/\mathbf{k}}\otimes A^{-1})=0$$  for $i+j<\dim X$, $i<\dim X$.
\end{cor}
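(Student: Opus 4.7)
The plan is to deduce this from its positive-characteristic counterpart, Theorem \ref{thm_vanishing_p}, by the standard spreading-out and reduction-mod-$p$ technique. Since both sides of the claimed vanishing commute with flat base change on the ground field, I may first replace $k$ by a finitely generated $\mathbb{Q}$-subalgebra over which all the data are defined, and then spread everything out to a proper flat morphism $\mathfrak{X}\to\spec R$ with $R$ a finitely generated $\mathbb{Z}$-algebra, carrying a log structure of semistable type over $(R,\mathbb{N}^r\mapsto 0)$ and an $R$-ample line bundle $\mathcal{A}$ restricting on the generic fiber to the original data. By generic flatness, generic smoothness of $\spec R \to \spec \mathbb{Z}$, and openness of ampleness and of log smoothness, I may shrink $\spec R$ to assume that $R$ is smooth over $\mathbb{Z}$ and that $\mathfrak{X} \to (\spec R, \mathbb{N}^r \mapsto 0)$ is semistable log smooth on every geometric fiber.

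Next I pick a closed point $s \in \spec R$ of residue characteristic $p > \dim X$; such points are dense in any finite-type $\mathbb{Z}$-scheme. The residue field $k(s)$ is finite, so $C_2(k(s)) = W_2(k(s))$. Because $R/\mathbb{Z}$ is smooth and $W_2(k(s)) \to k(s)$ is a square-zero thickening, the infinitesimal lifting criterion of smoothness provides a morphism $\spec W_2(k(s)) \to \spec R$ factoring the structural map $\spec k(s) \to \spec R$. Pulling $\mathfrak{X}$ back along this morphism produces a log smooth lifting of the fiber $\mathfrak{X}_s$ over $(C_2(k(s)),\mathbb{N}^r \mapsto 0)$; the log structure on the base is transported correctly because $\mathbb{N}^r$ maps to $0 \in R$, and hence to $0 \in W_2(k(s))$ after pullback. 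All hypotheses of Theorem \ref{thm_vanishing_p} are therefore satisfied for $\mathfrak{X}_s$ and $\mathcal{A}_s$, and the auxiliary bound $i+j < p$ appearing there is vacuous on the desired ranges because $p$ may be chosen arbitrarily large.

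The conclusion then follows by upper semi-continuity. The log smoothness of $\mathfrak{X} \to \spec R$ ensures that $\Omega^j_{\mathfrak{X}/(R,\mathbb{N}^r)}$ is locally free on $\mathfrak{X}$, so the sheaves $\Omega^j_{\mathfrak{X}/(R,\mathbb{N}^r)} \otimes \mathcal{A}^{\pm 1}$ are coherent and flat over $R$; upper semi-continuity of coherent cohomology for the proper family then implies that the vanishing at $s$ propagates to an open neighborhood in $\spec R$, which, after restricting to the irreducible component containing $s$, must contain the generic point. This yields the desired vanishing on $X$. The only genuinely delicate step is the production of the $(C_2(k(s)),\mathbb{N}^r \mapsto 0)$-lifting of $\mathfrak{X}_s$; smoothness of $R$ over $\mathbb{Z}$ is exactly what makes this automatic via the infinitesimal lifting criterion.
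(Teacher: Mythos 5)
Your argument is exactly the ``standard reduction mod $p$ technique'' that the paper invokes for this corollary without spelling it out: spread the data out over a finitely generated $\mathbb{Z}$-subalgebra, use smoothness over $\mathbb{Z}$ to produce the $W_2$-lifting at a closed point of large residue characteristic, apply Theorem \ref{thm_vanishing_p} there, and conclude by semicontinuity and flat base change, so it is correct and essentially the paper's intended proof. The one detail to state precisely is that the residue characteristic should be taken with $p>2\dim X$ (not merely $p>\dim X$) so that the constraint $i+j<p$ in Theorem \ref{thm_vanishing_p} is vacuous on the whole range $\dim X<i+j\leq 2\dim X$; since you allow $p$ arbitrarily large, this is harmless.
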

\begin{prop}\label{prop_vanish_ob}
Let $k$ be a field and $X$ be a semistable log Fano variety over $\mathbf{k}=(k,\mathbb{N}^r\mapsto 0)$, then
\begin{enumerate}
  \item if $\textrm{char}k=0$, then $H^2(X,(\Omega_{X/\mathbf{k}})^{\vee})=0$ and $H^2(X,\sO_X)=0$.
  \item if $\textrm{char}k=p>0$, $\dim X>p$ and $X$ admits a log smooth lifting over $(C_2(k),\mathbb{N}^r\mapsto 0)$, then $H^2(X,(\Omega_{X/\mathbf{k}})^{\vee})=0$ and $H^2(X,\sO_X)=0$.
\end{enumerate}
\end{prop}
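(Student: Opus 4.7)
The plan is to reduce both vanishing statements to the ``negative-ample'' half of the Akizuki--Nakano vanishing theorems proved just above (Corollary \ref{cor_vanishing_0} in characteristic $0$, Theorem \ref{thm_vanishing_p} in characteristic $p$) by first applying Serre--Grothendieck duality. Set $n = \dim X$ and $A := \omega_{X/k}^{-1}$. The Fano hypothesis makes $A$ an ample invertible sheaf, which in particular forces $X$ to be projective; since $X$ is semistable it is Gorenstein (noted just after Definition~\ref{defn_semistable}), so Serre--Grothendieck duality applies to any locally free sheaf on $X$.

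First I would use Lemma \ref{lem_log_canonical} to identify $\omega_{X/k}$ with $\bigwedge^{n}\Omega_{X/\mathbf{k}}$; in particular $\omega_{X/k}=A^{-1}$ and both $\sO_X$ and $(\Omega_{X/\mathbf{k}})^{\vee}$ are locally free. Duality then rewrites
\[
H^2(X,\sO_X)^{\vee}\simeq H^{n-2}\bigl(X,\omega_{X/k}\bigr)=H^{n-2}\bigl(X,\Omega^{0}_{X/\mathbf{k}}\otimes A^{-1}\bigr),
\]
\[
H^2\bigl(X,(\Omega_{X/\mathbf{k}})^{\vee}\bigr)^{\vee}\simeq H^{n-2}\bigl(X,\Omega_{X/\mathbf{k}}\otimes\omega_{X/k}\bigr)=H^{n-2}\bigl(X,\Omega^{1}_{X/\mathbf{k}}\otimes A^{-1}\bigr).
\]

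Next I would apply the second statement of Corollary \ref{cor_vanishing_0} in the characteristic $0$ case, and of Theorem \ref{thm_vanishing_p} in the characteristic $p$ case (invoking the assumed log smooth lifting over $C_2(k)$, and reading $\dim X<p$ for the condition displayed as ``$\dim X>p$''). The two index pairs to be checked are $(i,j)=(n-2,0)$ and $(i,j)=(n-2,1)$: in both cases $i+j\in\{n-2,n-1\}<n=\dim X$ and $i=n-2<n$, and in the positive characteristic case the hypothesis $\dim X<p$ yields $n-1<p$, so $i+j<\min(p,\dim X)$ holds. Both right-hand sides therefore vanish, finishing the proof.

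The main thing to get right is the choice to dualize before invoking the vanishing theorems: a direct application of the first (``positive-ample'') half to $H^{2}(X,\Omega^{n}\otimes A)$ or $H^{2}(X,\Omega^{n-1}\otimes A)$ would require the strictly stronger bounds $\dim X+2<p$ or $\dim X+1<p$ in characteristic $p$, which are incompatible with the stated hypothesis. The remaining concerns are essentially bookkeeping --- the small-dimensional cases $n\le 1$ are trivial because $H^{2}$ vanishes for dimension reasons, while $n=2$ is the edge case $H^{n-2}=H^{0}$ of an ample inverse, which is covered by the same index check.
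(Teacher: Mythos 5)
Your proof is correct. For the vanishing of $H^2(X,(\Omega_{X/\mathbf{k}})^{\vee})$ it is the paper's argument verbatim: Serre--Grothendieck duality (using that $X$ is Gorenstein and $\Omega_{X/\mathbf{k}}$ locally free) followed by the negative-twist half of Corollary \ref{cor_vanishing_0}, resp.\ Theorem \ref{thm_vanishing_p}, applied to $H^{n-2}(X,\Omega^{1}_{X/\mathbf{k}}\otimes\omega_{X/k})$. Where you genuinely diverge is $H^2(X,\sO_X)$: the paper uses Lemma \ref{lem_log_canonical} to rewrite $\sO_X\simeq\bigwedge^{n}\Omega_{X/\mathbf{k}}\otimes\omega_{X/k}^{-1}=\Omega^{n}_{X/\mathbf{k}}\otimes A$ and then invokes the positive-twist half at $(i,j)=(2,n)$, whereas you dualize and invoke the negative-twist half at $(i,j)=(n-2,0)$. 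The two routes are Serre-dual to one another, but the index ranges in Theorem \ref{thm_vanishing_p} are not symmetric under that duality unless $p>2\dim X$, and in characteristic $p$ the paper's route formally needs $i+j=\dim X+2<p$, which is stronger than the intended hypothesis $\dim X<p$ (the proposition's ``$\dim X>p$'' is evidently a typo for $\dim X<p$, as you correctly read it); your index check $i+j\le n-1$ and $n-2<\min(p,\dim X)$ fits that hypothesis exactly, so your variant is the cleaner one in positive characteristic and in effect repairs a small gap in the paper's own char-$p$ bookkeeping, at the cost of not needing Lemma \ref{lem_log_canonical} at all for this step (it is still needed elsewhere to identify $\omega_{X/k}$ with the log canonical sheaf, as you use when setting $A=\omega_{X/k}^{-1}$). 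Your remarks on the low-dimensional cases are fine, since the negative-twist statements include $i=0$.
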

\begin{proof}
By Theorem \ref{thm_vanishing_p} and Corollary \ref{cor_vanishing_0}, we see that
$$H^2(X,(\Omega_{X/\mathbf{k}})^{\vee})=H^{n-2}(X,\Omega_{X/\mathbf{k}}\otimes\omega_{X/k})=0.$$
By Lemma \ref{lem_log_canonical}, we obtain that
$$H^2(X,\sO_X)=H^2(X,\bigwedge^{\dim X}\Omega_{X/\mathbf{k}}\otimes\omega_{X/k}^{-1})=0.$$
\end{proof}

\section{Smoothing of Semistable Log Fano Varieties}
The vanishing theorems in the last section ensure the lifting of a semistable log Fano variety to a complete DVR. To extend this family to a variety base, we need to study the limit preserving property of semistable log smooth morphisms (Proposition \ref{prop_limit_preserving}). First we would like to generalize the notion of semistable log morphisms.
\begin{defn}
Let $Y$ be a log scheme which has a global chart $\bN^r$. A morphism $f:X\rightarrow Y$ is semistable log morphism if \'etale locally there exists a chart $\bN^n\rightarrow \sM_X$ on $X$ and a chart of $f$,
$$\bN^r=\bigoplus_{i=1}^r\bN e_i\rightarrow \bN^n=\bigoplus_{i=1}^n\bN e'_i$$
$$e_i\mapsto e_{l_{i-1}}+\cdots e_{l_{i}}$$
where $0=l_0\leq l_1<\cdots<l_r=n$, such that the induced morphism
$$X\rightarrow S\times_{\spec\mathbb{Z}[\bN^r]}\spec\mathbb{Z}[\bN^n]$$
is smooth in the usual sense.
\end{defn}
This definition generalizes Definition \ref{defn_semistable_log}. If $(Y,D_Y)$ is a smooth variety with the log structure induced by a simple normal crossing divisor $D_Y$, then the underlying morphism of a semistable log smooth morphism $f:X\rightarrow Y$ is semistable in the sense of Definition \ref{defn_semistable}.
\begin{prop}\label{prop_limit_preserving}
Given an $k[\bN^r]$-algebra $A$, denote $A^{\textrm{log}}$ be the log algebra $A$ whose log structure is given by the composition of $\bN^r\rightarrow k[\bN^r]$ and the structure homomorphism $k[\bN^r]\rightarrow A$. Then the pseudo-functor from the category of $k[\bN^r]$-algebras to the category of groupoids
$$\sF(k[\bN^r]\rightarrow A)=\textrm{ the groupoid of log scheme $X$ semistable log smooth over }A^{\textrm{log}}$$
is limit preserving, i.e., for any directed inverse system of $k[\bN^r]$-algebras $A_i$, the canonical functor
$$\sigma:\sF(\lim_iA_i)\rightarrow\lim_i\sF(A_i)$$
is an equivalence.
\end{prop}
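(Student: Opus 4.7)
The plan is to reduce the limit-preserving property of $\sF$ to two classical ingredients: the limit-preserving theorems for schemes of finite presentation and for smooth morphisms (EGA IV \S 8 and \S 17), together with the observation that fine log structures are \'etale locally determined by finite chart data. The chart description of semistable log smoothness makes this reduction transparent, since \'etale locally such a morphism is nothing but an ordinary smooth morphism of finite presentation to the fixed log model $\spec A \times_{\spec \bZ[\bN^r]} \spec \bZ[\bN^n]$, whose log structure is itself pulled back from the finitely generated monoid $\bN^n$.

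For essential surjectivity of $\sigma$, I would start with $X$ semistable log smooth over $A^{\log}$ where $A = \lim_i A_i$. By quasi-compactness one can cover $X$ by finitely many \'etale affine opens $U_\alpha$, each equipped with a chart $\bN^{n_\alpha} \to \sM_{U_\alpha}$ and a smooth morphism of finite presentation $U_\alpha \to \spec A \times_{\spec \bZ[\bN^r]} \spec \bZ[\bN^{n_\alpha}]$. Applying EGA IV Theorems 8.8.2 and 17.7.8 descends each of these smooth morphisms to some $U_{\alpha,i} \to \spec A_i \times_{\spec \bZ[\bN^r]} \spec \bZ[\bN^{n_\alpha}]$ for $i$ sufficiently large; endowing $U_{\alpha,i}$ with the pullback log structure from $\spec \bZ[\bN^{n_\alpha}]$ produces a local semistable log smooth model over $A_i^{\log}$. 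The overlap data $U_\alpha \times_X U_\beta$ and the cocycle identities on triple intersections are morphisms and equalities of morphisms between finitely presented log schemes over $A$, so they also descend after passing to a further index.

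For fully faithfulness of $\sigma$, a morphism in $\sF(A)$ between semistable log smooth $X, Y$ descending to $X_i, Y_i$ is \'etale locally given by a morphism of the underlying finitely presented $A$-schemes together with a compatible morphism of finitely generated monoid charts. The underlying morphism descends again by EGA IV 8.8.2; the chart part is finite combinatorial data, already defined over any $A_i$; and equality of two such log morphisms, being an equality of morphisms of finitely presented objects plus an equality of monoid maps, holds at some finite stage.

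The hard part will be the simultaneous descent: passing to a single large index $i$ that works at once for the chosen cover, all the charts, the overlap and cocycle data, and the smoothness of every descended map. The management of these finitely many finitary conditions is routine via the standard d\'evissage of EGA IV \S 8, but requires care to ensure that the resulting object is not merely log smooth but \emph{semistable} log smooth over $A_i^{\log}$ — that is, that the descended charts on the $U_{\alpha,i}$ remain compatible under the descended gluing maps in a way that globally realizes the semistable local model. Verifying this last compatibility is what makes the proof more than a direct quotation of EGA IV, and will occupy most of the technical work.
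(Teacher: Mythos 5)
Your strategy is sound, and its first and last steps coincide with the paper's: descend the underlying finitely presented scheme, then verify semistable log smoothness pointwise via the chart description (spreading out the chart morphism and the smoothness of $X\rightarrow \spec A^{\log}\times_{\spec\mathbb{Z}[\bN^r]}\spec\mathbb{Z}[\bN^n]$, and concluding by quasi-compactness). The middle step, however, is genuinely different. The paper never glues local charted models: having descended $\underline{X}$ to some $\underline{X_{i_1}}$, it descends the log structure in one stroke by invoking Olsson's theorem that the stack $\textrm{Log}_{A_{i_1}}$ of fine log structures is algebraic and locally of finite presentation, hence limit preserving. You propose instead a hands-on descent of charts together with overlap and cocycle data. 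That can be made to work, but two remarks. First, you need not glue schemes at all: $\underline{X}$ is of finite presentation over $A$ and descends globally by EGA IV 8.8.2, and since log structures are sheaves on the \'etale site, only the comparison isomorphisms of the locally charted log structures need to descend. Second, the step you defer as ``the technical work'' is exactly what the citation of Olsson packages: an isomorphism of fine log structures over an overlap is not purely ``finite combinatorial data'' of monoid maps --- it sends each chart generator to a chart element times a unit of $\sO^{\ast}$, so descending these isomorphisms and the cocycle identities requires that sections (in particular units) over a quasi-compact, quasi-separated, finitely presented scheme commute with the filtered colimit of the $A_i$ (EGA IV \S 8.5), not merely 8.8.2 and 17.7.8. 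With that ingredient supplied, your argument is a more elementary, self-contained substitute for the appeal to the $\textrm{Log}$ stack, at the cost of the bookkeeping you acknowledge; the paper's route buys brevity and avoids the chart-compatibility verification entirely. Your local treatment of full faithfulness is a spelled-out version of what the paper calls formal and is fine once the same remark about unit sections is incorporated.
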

\begin{proof}
Denote by $A$ the limit algebra $\lim_iA_i$.
The fully faithfullness of $\sigma$ is formal. It suffices to prove that $\sigma$ is essentially surjective, i.e., for any $X\in\sF(A)$, there exists an index $i_0$ and $X_{i_0}\in\sF(A_{i_0})$ such that its base change on $A$ is $X$.

Since $\underline{X}$ is locally of finite presentation over $\spec(A)$, there exists an index $i_1$ and a scheme $\underline{X_{i_1}}$ locally of finite presentation over $\spec(A_{i_1})$ such that its base change on $\spec(A)$ is $\underline{X}$. To construct the log structure, we consider the pseudo-functor
$$\textrm{Log}_{A_{i_1}}(X)=\textrm{ the groupoid of fine log structures on } X \textrm{ over } A^{\textrm{log}}_{i_1}.$$
It is proved in \cite{Olsson2003} that this pseudo-functor is represented by an algebraic stack locally of finite presentation over $A_{i_1}$. Therefore $\textrm{Log}_{A_{i_1}}$ is limit preserving. Applying $\textrm{Log}_{A_{i_1}}$ on the directed inverse system $\{X_{i_1}\times_{A_{i_1}}A_i\}_{i\geq i_1}$, we see that there exists an index $i_2\geq i_1$ and a fine log structure over
$$\underline{X_{i_2}}=\underline{X_{i_1}}\times_{\spec A^{\textrm{log}}_{i_1}}\spec A^{\textrm{log}}_{i_2}$$
such that the base change of the log scheme $X_{i_2}$ over $A$ is $X$. So far, $X_{i_2}$ is not necessarily semistable log smooth over $A^{\log}_{i_2}$. We have to show that there exists an index $i_0\geq i_2$ such that $$X_{i_0}=X_{i_2}\times_{\spec A^{\log}_{i_2}}\spec A^{\log}_{i_0}$$
is semistable log smooth over $A^{\log}_{i_0}$. Fixing a point $x\in X$, there exists an \'etale local chart $\bN^m\rightarrow \sM_X$ at $x$, and a chart of $f$,
$$\bN^r=\bigoplus_{i=1}^r\bN e_i\rightarrow \bN^n=\bigoplus_{i=1}^n\bN e'_i$$
$$e_i\mapsto e_{l_{i-1}+1}+\cdots e_{l_{i}}$$
where $0=l_0\leq l_1<\cdots<l_k=n$, such that the induced morphism
$$X\rightarrow \spec A^{\log}\times_{\spec\mathbb{Z}[\bN^r]}\spec\mathbb{Z}[\bN^n]$$
is smooth in the usual sense. There exists an index $i_3\geq i_2$ such that the diagram
$$\xymatrix{
X \ar[r] \ar[d] & \spec\mathbb{Z}[\bN^n] \ar[d]\\
\spec A^{\log} \ar[r] & \spec\mathbb{Z}[\bN^r]\\
}$$
factors through
$$\xymatrix{
X_{i_3} \ar[r] \ar[d] & \spec\mathbb{Z}[\bN^n] \ar[d]\\
\spec A^{\log}_{i_3} \ar[r] & \spec\mathbb{Z}[\bN^r]\\
}$$
and the induced morphism
$$X_{i_3}\rightarrow \spec A^{\log}_{i_3} \times_{\spec\mathbb{Z}[\bN^r]}\spec\mathbb{Z}[\bN^n]$$
is smooth in the usual sense. Hence locally at the pre-images of $x\in X$, $X_{i_3}$ is semistable log smooth over $A^{\log}_{i_3}$. Since $X$ is quasi-compact, there exists an index $i_0\geq i_2$ such that $X_{i_0}=X_{i_2}\times_{A^{\log}_{i_2}}A^{\log}_{i_0}$ is semistable log smooth over $A^{\log}_{i_0}$.
\end{proof}
\begin{thm}\label{thm_semistable_lifting}
Let $k$ be a field and $X$ be a semistable projective log variety over $\mathbf{k}=(k,\mathbb{N}^r\mapsto 0)$ such that $H^2(X,(\Omega_{X/\mathbf{k}})^{\vee})=0$ and $H^2(X,\sO_X)=0$. Then there exists a log variety $\sX$ which is semistable log smooth over $(B,D_B)$ such that
\begin{enumerate}
  \item $B$ is a $r$ dimensional smooth variety over $k$, $0\in B$ is a $k$-point, and $D_B$ is a simple normal crossing divisor whose number of branches at $0$ is $r$;
  \item $\sX_0\simeq X$ as log varieties.
\end{enumerate}
\end{thm}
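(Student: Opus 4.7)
The plan is to construct $\sX$ in three stages: a formal semistable log smooth family $\hat{\sX}$ over the completed log base $(\mathrm{Spf}\, R, \bN^r)$ with $R = k[[t_1,\ldots,t_r]]$; algebraization via a lifted polarization; and descent to an \'etale neighborhood of a $k$-point in affine $r$-space using Artin approximation together with the limit-preserving property from Proposition \ref{prop_limit_preserving}.

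\textbf{Formal stage.} Let $R_n = R/\mathfrak{m}^{n+1}$, endowed with the fine log structure induced by $\bN^r \to R$, $e_i \mapsto t_i$. Each $\spec R_{n-1} \hookrightarrow \spec R_n$ is a strict square-zero thickening with ideal $I_n = \mathfrak{m}^n/\mathfrak{m}^{n+1}$. Starting from $X_0 = X$, I inductively build semistable log smooth lifts $X_n \to \spec R_n$. By Theorem \ref{thm_log_deformation}, the obstruction at each step lies in $H^2(X_{n-1}, (\Omega_{X_{n-1}/R_{n-1}})^\vee \otimes I_n)$; since $I_n$ is annihilated by $\mathfrak{m}$ and $\Omega$ commutes with base change for log smooth morphisms, this group is $H^2(X, (\Omega_{X/\mathbf{k}})^\vee) \otimes_k I_n$, which vanishes by hypothesis. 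Remark \ref{rmk_lifting}, applied to the canonical chart $\bN^r \to \bN^n$ arising from the semistability of $X$, ensures that each $X_n$ remains semistable log smooth. The same argument with $H^2(X, \sO_X) = 0$ extends a chosen ample line bundle $L$ on $X$ to a compatible system $\hat{L}$ on $\hat{\sX} = \varinjlim X_n$, which is relatively ample over $\mathrm{Spf}\, R$.

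\textbf{Algebraization.} By Grothendieck's existence theorem, the polarized formal scheme $(\hat{\sX}, \hat{L})$ algebraizes to a projective family $\sX_R \to \spec R$ with an ample line bundle $L_R$. The \'etale-local finitely presented charts $\bN^n \to \sM_{\hat{\sX}}$ coming from the semistable structure extend uniquely to $\sX_R$, producing a fine log structure so that $\sX_R \to \spec R^{\log}$ is semistable log smooth, i.e.\ an object of the functor $\sF$ of Proposition \ref{prop_limit_preserving} with central fibre $X$.

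\textbf{Descent.} View $A_0 = k[t_1,\ldots,t_r]$ as a $k[\bN^r]$-algebra via the identity, and let $A = A_{0,(t_1,\ldots,t_r)}^h$ be the henselization at the origin, so $\widehat{A} = R$. By Artin's approximation theorem applied to the limit-preserving functor $\sF$, there exists $\sX_A \in \sF(A)$ agreeing with $\sX_R$ modulo $\mathfrak{m}^2$; in particular, its central fibre is $X$ as a log variety. Since $A$ is the filtered colimit of coordinate rings of \'etale neighborhoods $(B, 0) \to (\mathbb{A}^r_k, 0)$ and $\sF$ is limit preserving, $\sX_A$ descends to a family $\sX \to B$ over one such neighborhood. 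Letting $D_B$ be the pullback of $V(t_1 \cdots t_r)$ to $B$, we obtain the required smooth $r$-dimensional pair $(B, D_B)$ with $r$ branches through the $k$-point lying over $0$, and $\sX \to (B, D_B)$ is the desired semistable family.

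\textbf{Main obstacle.} The most delicate point is the algebraization step: Grothendieck's existence theorem produces the underlying polarized scheme $\sX_R$, but one must verify that the formal fine log structure together with its semistable charts algebraizes in a compatible fashion so that the resulting morphism truly satisfies the semistable log smooth condition. Granting this, Artin approximation combined with limit preservation reduces the remainder to a standard descent argument.
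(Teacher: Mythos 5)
Your proposal follows essentially the same route as the paper: inductive formal lifting via Theorem \ref{thm_log_deformation} using the two $H^2$ vanishing hypotheses (one for the log smooth deformation, one for lifting the polarization), algebraization by Grothendieck's existence theorem over $\spec k[[x_1,\ldots,x_r]]$, and descent to an \'etale neighborhood of the origin via Artin approximation together with Proposition \ref{prop_limit_preserving}. The delicate point you flag --- that the semistable charts persist through the lifting and algebraization --- is exactly what the paper addresses by invoking Remark \ref{rmk_lifting} to exhibit the \'etale-local chart diagram for the algebraized family.
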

\begin{proof}
By Theorem \ref{thm_log_deformation}, $X$ has a log smooth lifting $\widehat{\sX}$ over the formal log scheme $$\widehat{B}=(\textrm{Spf}k[[x_1,\cdots,x_r]],\mathbb{N}^r=\oplus_{i=1}^r\mathbb{N}e_i, e_i\mapsto x_i).$$
Since $H^2(X,\sO_X)=0$, any ample line bundle on $X$ has a lifting over $\widehat{B}$. Denote
$$B_1=(\spec k[[x_1,\cdots,x_r]],\mathbb{N}^r=\oplus_{i=1}^r\mathbb{N}e_i, e_i\mapsto x_i).$$
By Grothendieck's existence theorem, there exists a log variety $\sX_1$ log smooth over $B_1$ whose formal completion is $\widehat{\sX}$.
By Remark \ref{rmk_lifting}, \'etale locally there is a commutative diagram
$$\xymatrix{
X \ar[r] \ar[d]^f & \sX \ar[d]^{f'} \\
\spec(\bZ[\bN^n])\times_{\spec(\bZ[\bN^r])}k \ar[r] \ar[d] &
\spec(\bZ[\bN^n])\times_{\spec(\bZ[\bN^r])}\spec k[[x_1,\cdots,x_r]] \ar[d] \\
k \ar[r] & \spec k[[x_1,\cdots,x_r]]\\
},
$$
where $f$ is strict and \'etale, $f'$ is strict, \'etale and the square on the top square is a fiber product. The morphism $\spec(\bZ[\bN^r])\rightarrow\spec(\bZ[\bN^n])$ is induced by the homomorphism of monoids
$$\bN^r=\bigoplus_{i=1}^r\bN e_i\rightarrow \bN^n=\bigoplus_{i=1}^n\bN e'_i$$
$$e_i\mapsto e_{l_{i-1}+1}+\cdots e_{l_{i}}.$$
Here $0=l_0\leq l_1<\cdots<l_r\leq n$. This shows that $\sX_1\rightarrow B_1$ is semistable.

To extend the family $\sX_1$ over a variety base, let us consider the pseudo-functor from the $k[\bN^r]$-algebras to groupoids:
$$\sG(k[\bN^r]\rightarrow A)=\{\textrm{groupoids of semistable log smooth varieties over }A^{\textrm{log}}\}$$
where $A^{\textrm{log}}$ is the log algebra $A$ whose log structure is given by the composition of $\bN^r\rightarrow k[\bN^r]$ and the structure homomorphism $k[\bN^r]\rightarrow A$. By Proposition \ref{prop_limit_preserving}, this functor is limit preserving. Hence by Artin's approximation (\cite{Artin1969}), there exists $\sX_1^h\in\sG(k[\bN^r]^h)$ whose base change on $k$ is $X$. Here $k[\bN^r]^h$ denotes the Henselization of $k[\bN^r]$ at the maximal ideal generated by $\bN^r$. Again by Proposition \ref{prop_vanish_ob}, $\sX_1^h$ extends to an $k[\bN^r]$-algebra $R$ which is \'etale over $k[\bN^r]$. Denote $B=\spec R^{\log}$, then this gives a semistable log smooth morphism $\sX\rightarrow B)$ which is required in the theorem.
\end{proof}
\begin{cor}\label{cor_smoothing_0}
Let $k$ be a field of characteristic 0 and $X$ be a Fano semistable variety. Let $r\geq 1$ be an integer. Then the followings are equivalent:
\begin{enumerate}
  \item there exists a log variety $\sX$ which is semistable log smooth over $(B,D_B)$ such that
\begin{enumerate}
  \item $B$ is an $r$ dimensional smooth variety over $k$, $0\in B$ is a $k$-point, and $D_B$ is a simple normal crossing divisor whose number of branches at $0$ is $r$;
  \item $\sX_0\simeq X$ as log varieties.
\end{enumerate}
  \item $X$ has a log structure of semistable type over $(k,\mathbb{N}^r\mapsto 0)$.
\end{enumerate}
\end{cor}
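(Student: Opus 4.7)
The plan is to dispatch the two implications separately; the direction $(1)\Rightarrow(2)$ is essentially by definition, and the direction $(2)\Rightarrow(1)$ assembles Proposition \ref{prop_vanish_ob} and Theorem \ref{thm_semistable_lifting}.

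For $(1)\Rightarrow(2)$, I would restrict the log structure on $\sX$ to the special fiber $X=\sX_0$. After shrinking $B$ around $0$, the log structure on $B$ coming from $D_B$ has a chart $\bN^r\to\sM_B$ sending the generators to local equations of the $r$ branches of $D_B$ at $0$. Since $\sX\to(B,D_B)$ is semistable log smooth, \'etale locally at each point of $X$ there is a chart $\bN^n\to\sM_{\sX}$ and a factorization $\bN^r\to\bN^n$ of the form $e_i\mapsto e_{l_{i-1}+1}+\cdots+e_{l_i}$ as in Definition \ref{defn_semistable_log}. Pulling these charts back along $X\hookrightarrow\sX$ furnishes exactly the local data required by Definition \ref{defn_semistable_log} for $X$ to be semistable log smooth over $(k,\bN^r\mapsto 0)$, i.e.\ a global log structure of semistable type.

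For $(2)\Rightarrow(1)$, assume $X$ carries a log structure of semistable type over $\mathbf{k}=(k,\bN^r\mapsto 0)$. Then $X$ is a semistable projective log variety over $\mathbf{k}$: projectivity comes from the Fano hypothesis ($\omega_{X/k}$ is anti-ample, hence very ample after a power), and Lemma \ref{lem_log_canonical} identifies $\omega_{X/k}\simeq\bigwedge^{\dim X}\Omega_{X/\mathbf{k}}$, so $X$ is log Fano as well. Proposition \ref{prop_vanish_ob}(1) then gives $H^2(X,(\Omega_{X/\mathbf{k}})^\vee)=0$ and $H^2(X,\sO_X)=0$. Theorem \ref{thm_semistable_lifting} now produces a log variety $\sX$, semistable log smooth over a pair $(B,D_B)$ with $B$ an $r$-dimensional smooth $k$-variety, $0\in B$ a $k$-point, and $D_B$ a simple normal crossing divisor with $r$ branches at $0$, and with $\sX_0\simeq X$ as log varieties.

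There is no serious obstacle beyond invoking the earlier results: the heavy lifting has been done in Theorem \ref{thm_semistable_lifting} (which runs formal log deformation using Theorem \ref{thm_log_deformation}, algebraizes via Grothendieck's existence theorem, and then spreads out over a variety base using the limit-preserving Proposition \ref{prop_limit_preserving} together with Artin approximation) and in Proposition \ref{prop_vanish_ob} (which rests on the Kodaira--Akizuki--Nakano vanishing of Corollary \ref{cor_vanishing_0}). The only bookkeeping points are the identification of ``Fano'' with ``log Fano'' via Lemma \ref{lem_log_canonical}, and matching the terminology ``log structure of semistable type'' in (2) with the local charts of Definition \ref{defn_semistable_log}; both are immediate.
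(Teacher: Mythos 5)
Your proposal is correct and follows essentially the same route as the paper: $(1)\Rightarrow(2)$ by restricting the log structure of the semistable family to the fiber, and $(2)\Rightarrow(1)$ by combining Proposition \ref{prop_vanish_ob} with Theorem \ref{thm_semistable_lifting}. The extra bookkeeping you supply (projectivity from anti-ampleness of $\omega_{X/k}$ and the identification of Fano with log Fano via Lemma \ref{lem_log_canonical}) is left implicit in the paper but is exactly what is needed to invoke those results.
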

\begin{proof}
A semistable morphism $f:(\sX,D_{\sX})\rightarrow (B,D_B)$ is log smooth if we endow $\sX$ and $B$ with the log structures induced by the divisors $D_{\sX}$ and $D_B$. If $X$ is isomorphic to a fiber $\sX_b$ of a semistable morphism, then the log structure restricted on $\sX_b$ gives $X$ a log structure of semistable type over $(k,\mathbb{N}^r\mapsto 0)$ where $r$ is the number of formal branches of $D_B$ passing $b$.

The converse is the combination of Proposition \ref{prop_vanish_ob} and Theorem \ref{thm_semistable_lifting}. Since $\omega_{\sX/B'}$ is locally free, there exists an open neighbourhood $U$ of 0 such that $\omega_{\sX_b}$ is anti-ample for every $b\in U$.
\end{proof}

By the same argument, we get the following corolary.
\begin{cor}\label{cor_smoothing_p}
Let $k$ be a field of characteristic $p>0$ and $X$ be a log variety $X$ semistable log smooth over $(k,\mathbb{N}^r\mapsto 0)$ for some $r\geq 0$. Fix $\dim X<p$. If $X$ is Fano and admits a log smooth lifting over $(C_2(k),\mathbb{N}^r\mapsto 0)$, then there exists a smooth variety with a normal crossing divisor $(\sX,D_{\sX})$ which is semistable over $(B,D_B)$ such that
\begin{enumerate}
  \item $B$ is an $r$ dimensional smooth variety over $k$, $0\in B$ is a $k$-point, and $D_B$ is a simple normal crossing divisor whose number of branches at $0$ is $r$;
  \item $\sX_0\simeq X$ as log varieties.
\end{enumerate}
In particular, if $X$ is semistable log smooth over $(k,\mathbb{N}\mapsto 0)$ (i.e., $d$-semistable in \cite{Friedman1983}), then $X$ appears in a semistable reduction over $C(k)$.
\end{cor}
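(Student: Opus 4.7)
The approach is a direct application of Theorem \ref{thm_semistable_lifting} after verifying its cohomological hypotheses. Under the assumptions $\dim X < p$, $X$ Fano, and the existence of a log smooth lifting of $X$ over $(C_2(k), \mathbb{N}^r \mapsto 0)$, Proposition \ref{prop_vanish_ob}(2) supplies the two vanishings $H^2(X, (\Omega_{X/\mathbf{k}})^{\vee}) = 0$ and $H^2(X, \sO_X) = 0$. Theorem \ref{thm_semistable_lifting} then produces a semistable log smooth family $\sX \to (B, D_B)$ over a smooth $r$-dimensional $k$-variety $B$ with $\sX_0 \simeq X$ as log varieties. The fact that the underlying morphism is semistable in the sense of Definition \ref{defn_semistable} is built into the construction: the \'etale local charts produced in the proof of Theorem \ref{thm_semistable_lifting} (via Remark \ref{rmk_lifting}) have exactly the prescribed form, giving normal crossing divisors $D_\sX$ and $D_B$ of the required shape.

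For the ``in particular'' statement with $r=1$, the plan is to replace the equicharacteristic formal step of Theorem \ref{thm_semistable_lifting} by a mixed-characteristic lifting over the Cohen ring $C(k)$, endowed with the log structure $\mathbb{N} \to C(k)$, $1 \mapsto p$, whose reduction modulo $p$ is precisely $(k, \mathbb{N} \mapsto 0)$. Starting from the assumed log smooth lifting over $C_2(k)$, I would apply Theorem \ref{thm_log_deformation} inductively to extend to $\spec C_{n+1}(k)$: at each step the obstruction lies in $H^2(X, (\Omega_{X/\mathbf{k}})^{\vee} \otimes I)$ with $I = p^n C(k)/p^{n+1} C(k) \simeq k$ as an $\sO_X$-module, and this vanishes by Proposition \ref{prop_vanish_ob}(2). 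Iterating yields a formal log smooth lifting $\widehat{\sX}$ over $\textrm{Spf}\, C(k)$.

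To algebraize, the vanishing $H^2(X, \sO_X) = 0$ kills the obstructions to lifting a fixed ample line bundle on $X$ step by step up to $\widehat{\sX}$, so Grothendieck's existence theorem produces a projective log smooth scheme $\sX \to \spec C(k)$ completing $\widehat{\sX}$. The local chart analysis of Remark \ref{rmk_lifting}, applied at each lifting step, shows that the underlying morphism $\sX \to \spec C(k)$ is semistable, giving the desired semistable reduction. The main technical point that requires care is reconciling the log structure conventions: the lifting hypothesis is stated with log structure $\mathbb{N} \mapsto 0$ on $C_2(k)$, whereas a semistable reduction over $C(k)$ naturally carries the log structure $\mathbb{N} \mapsto p$, so one must verify that the given $C_2(k)$-lifting can be reinterpreted compatibly with the latter log structure before the inductive lifting procedure goes through. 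Apart from this bookkeeping, the argument is a direct adaptation of the proofs of Theorem \ref{thm_semistable_lifting} and Corollary \ref{cor_smoothing_0}.
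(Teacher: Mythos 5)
Your proposal matches the paper's own (very brief) proof: the paper simply says the corollary follows ``by the same argument'' as Corollary \ref{cor_smoothing_0}, i.e.\ by combining Proposition \ref{prop_vanish_ob}(2) (whose hypothesis ``$\dim X>p$'' is a typo for $\dim X<p$) with Theorem \ref{thm_semistable_lifting}, which is exactly what you do. Your sketch of the $r=1$ mixed-characteristic case (inductive unobstructed lifting over $(C_n(k),\mathbb{N}\mapsto p)$, lifting an ample bundle, Grothendieck existence) is the natural adaptation the paper leaves implicit; note only that the assumed lifting over $(C_2(k),\mathbb{N}\mapsto 0)$ need not be ``reinterpreted''---it serves solely as input to the vanishing theorem, while the liftings over $(C_n(k),\mathbb{N}\mapsto p)$ are constructed afresh since their obstruction groups vanish.
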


\begin{bibdiv}
\begin{biblist}
\bib{AK2000}{article}{
   author={Abramovich, D.},
   author={Karu, K.},
   title={Weak semistable reduction in characteristic 0},
   journal={Invent. Math.},
   volume={139},
   date={2000},
   number={2},
   pages={241--273},
   issn={0020-9910},
   review={\MR{1738451}},
   doi={10.1007/s002229900024},
}
\bib{Artin1969}{article}{
   author={Artin, M.},
   title={Algebraic approximation of structures over complete local rings},
   journal={Inst. Hautes \'Etudes Sci. Publ. Math.},
   number={36},
   date={1969},
   pages={23--58},
   issn={0073-8301},
   review={\MR{0268188}},
}
\bib{Friedman1983}{article}{
   author={Friedman, Robert},
   title={Global smoothings of varieties with normal crossings},
   journal={Ann. of Math. (2)},
   volume={118},
   date={1983},
   number={1},
   pages={75--114},
   issn={0003-486X},
   review={\MR{707162}},
   doi={10.2307/2006955},
}
\bib{Fujita1990}{article}{
   author={Fujita, Takao},
   title={On del Pezzo fibrations over curves},
   journal={Osaka J. Math.},
   volume={27},
   date={1990},
   number={2},
   pages={229--245},
   issn={0030-6126},
   review={\MR{1066621}},
}
\bib{Grothendieck1971}{book}{
   author={Grothendieck, Alexander},
   title={Rev\^etements \'etales et groupe fondamental.},
   series={S\'eminaire de G\'eom\'etrie Alg\'ebrique},
   volume={224},
   publisher={Springer Lecture Notes},
   date={1971},
}
\bib{Hartshorne1966}{book}{
   author={Hartshorne, Robin},
   title={Residues and duality},
   series={Lecture notes of a seminar on the work of A. Grothendieck, given
   at Harvard 1963/64. With an appendix by P. Deligne. Lecture Notes in
   Mathematics, No. 20},
   publisher={Springer-Verlag, Berlin-New York},
   date={1966},
   pages={vii+423},
   review={\MR{0222093}},
}
\bib{Kachi2007}{article}{
   author={Kachi, Yasuyuki},
   title={Global smoothings of degenerate Del Pezzo surfaces with normal
   crossings},
   journal={J. Algebra},
   volume={307},
   date={2007},
   number={1},
   pages={249--253},
   issn={0021-8693},
   review={\MR{2278052}},
   doi={10.1016/j.jalgebra.2006.03.029},
}
\bib{Karu1999}{book}{
   author={Karu, Kalle},
   title={Semistable reduction in characteristic zero},
   note={Thesis (Ph.D.)--Boston University},
   publisher={ProQuest LLC, Ann Arbor, MI},
   date={1999},
   pages={103},
   isbn={978-0599-23545-8},
   review={\MR{2698884}},
}
\bib{KKato1988}{article}{
   author={Kato, Kazuya},
   title={Logarithmic structures of Fontaine-Illusie},
   conference={
      title={Algebraic analysis, geometry, and number theory (Baltimore, MD,
      1988)},
   },
   book={
      publisher={Johns Hopkins Univ. Press, Baltimore, MD},
   },
   date={1989},
   pages={191--224},
   review={\MR{1463703 (99b:14020)}},
}
\bib{FKato1996}{article}{
   author={Kato, Fumiharu},
   title={Log smooth deformation theory},
   journal={Tohoku Math. J. (2)},
   volume={48},
   date={1996},
   number={3},
   pages={317--354},
   issn={0040-8735},
   review={\MR{1404507}},
   doi={10.2748/tmj/1178225336},
}
\bib{Kawamata1994}{article}{
   author={Kawamata, Yujiro},
   author={Namikawa, Yoshinori},
   title={Logarithmic deformations of normal crossing varieties and
   smoothing of degenerate Calabi-Yau varieties},
   journal={Invent. Math.},
   volume={118},
   date={1994},
   number={3},
   pages={395--409},
   issn={0020-9910},
   review={\MR{1296351}},
   doi={10.1007/BF01231538},
}
\bib{Illusie2002}{book}{
   author={Bertin, Jos{\'e}},
   author={Demailly, Jean-Pierre},
   author={Illusie, Luc},
   author={Peters, Chris},
   title={Introduction to Hodge theory},
   series={SMF/AMS Texts and Monographs},
   volume={8},
   note={Translated from the 1996 French original by James Lewis and
   Peters},
   publisher={American Mathematical Society, Providence, RI; Soci\'et\'e
   Math\'ematique de France, Paris},
   date={2002},
   pages={x+232},
   isbn={0-8218-2040-0},
   review={\MR{1924513 (2003g:14009)}},
}
\bib{Mumford1973}{book}{
   author={Kempf, G.},
   author={Knudsen, Finn Faye},
   author={Mumford, D.},
   author={Saint-Donat, B.},
   title={Toroidal embeddings. I},
   series={Lecture Notes in Mathematics, Vol. 339},
   publisher={Springer-Verlag, Berlin-New York},
   date={1973},
   pages={viii+209},
   review={\MR{0335518}},
}
\bib{Olsson2003}{article}{
   author={Olsson, Martin C.},
   title={Logarithmic geometry and algebraic stacks},
   language={English, with English and French summaries},
   journal={Ann. Sci. \'Ecole Norm. Sup. (4)},
   volume={36},
   date={2003},
   number={5},
   pages={747--791},
   issn={0012-9593},
   review={\MR{2032986}},
   doi={10.1016/j.ansens.2002.11.001},
}
\bib{Tziolas2015}{article}{
   author={Tziolas, Nikolaos},
   title={Smoothings of Fano varieties with normal crossing singularities},
   journal={Proc. Edinb. Math. Soc. (2)},
   volume={58},
   date={2015},
   number={3},
   pages={787--806},
   issn={0013-0915},
   review={\MR{3391373}},
   doi={10.1017/S0013091515000024},
}
\end{biblist}
\end{bibdiv}
\end{document}